\documentclass[11pt,a4paper]{article}
\pagestyle{myheadings}\markboth{} {}

\oddsidemargin .5cm \evensidemargin .5cm \textwidth=15cm
\textheight=21truecm \unitlength=1cm
\parskip 3mm
\baselineskip 15pt

\usepackage{amsmath,amsfonts,amssymb,latexsym,graphics,epsfig,url}
\usepackage{color}
\usepackage{amsthm,enumerate}
\usepackage[english]{babel}
\usepackage{graphicx}

\input amssym.def
\newsymbol\rtimes 226F
\newfont{\nset}{msbm10}
\newcommand{\ns}[1]{\mbox{\nset #1}}

\newcommand{\executeiffilenewer}[3]{%
\ifnum\pdfstrcmp{\pdffilemoddate{#1}}%
{\pdffilemoddate{#2}}>0%
{\immediate\write18{#3}}\fi%
}
\newcommand{%
\executeiffilenewer{.svg}{.pdf}%
{inkscape -z -D --file=.svg %
--export-pdf=.pdf --export-latex}%
\input{.pdf_tex}%
}[1]{%
\executeiffilenewer{#1.svg}{#1.pdf}%
{inkscape -z -D --file=#1.svg %
--export-pdf=#1.pdf --export-latex}%
\input{#1.pdf_tex}%
}

\newtheorem{theorem}{Theorem}[section]

\newtheorem{corollary}[theorem]{Corollary}
\newtheorem{proposition}[theorem]{Proposition}

\DeclareMathOperator{\dgr}{dgr}
\DeclareMathOperator{\tr}{tr}
\DeclareMathOperator{\spec}{sp}

\def\R{\ns R}



\def\e{\mbox{\boldmath $e$}}
\def\j{\mbox{\boldmath $j$}}

\def\z{\mbox{\boldmath $z$}}

\def\vecu{\mbox{\boldmath $u$}}
\def\vecv{\mbox{\boldmath $v$}}

\def\vecphi{\mbox{\boldmath $\phi$}}
\def\vec0{\mbox{\boldmath $0$}}

\def\A{\mbox{\boldmath $A$}}
\def\B{\mbox{\boldmath $B$}}

\def\D{\mbox{\boldmath $D$}}

\def\G{\Gamma}

\def\I{\mbox{\boldmath $I$}}

\def\M{\mbox{\boldmath $M$}}
\def\O{\mbox{\boldmath $O$}}
\def\S{\mbox{\boldmath $S$}}

\def\I{\mbox{\boldmath $I$}}

\def\e{\mbox{\boldmath $e$}}
\def\f{\mbox{\boldmath $f$}}
\def\j{\mbox{\boldmath $j$}}

\def\z{\mbox{\boldmath $z$}}
\def\vecphi{\mbox{\boldmath $\phi$}}

\def\vec0{\mbox{\bf 0}}

\def\tr{\mathop{\rm tr}\nolimits}

\def\G{\Gamma}

\def\Re{\mathbb R}

\begin{document}

\title{On the $k$-independence number of graphs}
\author{A. Abiad$^a$, G. Coutinho$^b$, M. A. Fiol$^c$ \\
$^{a}${\small Department of Quantitative Economics} \\
{\small Maastricht University, Maastricht, The Netherlands,}\\
{\small {\tt{a.abiadmonge@maastrichtuniversity.nl}}}\\
$^b${\small Department of Computer Science}\\
{\small Federal University of Minas Gerais, Belo Horizonte, Brazil}\\
{\small{\tt {gmcout@gmail.com}}}\\
$^{c}${\small Departament de Matem\`atiques} \\
{\small Universitat Polit\`ecnica de Catalunya, Barcelona, Catalonia} \\
{\small Barcelona Graduate School of Mathematics}\\
{\small{\tt{miguel.angel.fiol@upc.edu}}}}

\maketitle

\begin{abstract}
This paper generalizes and unifies the existing spectral bounds on the $k$-independence number of a graph, which is the maximum size of a set of vertices at pairwise distance greater than $k$. The previous bounds known in the literature follow as a corollary of the main results in this work. We show that for most cases our bounds outperform the previous known bounds. Some infinite graphs where the bounds are tight are also presented. Finally, as a byproduct, we derive some lower spectral bounds for the diameter of a graph.
\end{abstract}

\maketitle

\noindent{\em Keywords:} Graph,  $k$-independence number, Spectrum, Interlacing, Regular partition, Antipodal distance-regular graph, Diameter.

\noindent{\em Mathematics Subject Classifications:} 05C50, 05C69.

\section{Introduction}
Given a graph $G$, let $\alpha_k = \alpha_k(G)$ denote the size of the largest set of vertices such that any two vertices in the set are at distance larger than $k$. This choice of notation is no coincidence, since actually $\alpha_1$ is just the independence number of a graph.
The parameter $\alpha_k(G)$ therefore represents the largest number of vertices which can be $k+1$ spread out in $G$. It is known that determining $\alpha_k$ is NP-Hard in general \cite{KZ1993}.

The $k$-independence number of a graph is directly related to other combinatorial parameters such as the average distance \cite{FH1997}, packing chromatic number \cite{GHHHR2008}, injective chromatic number \cite{HKSS2002}, and strong chromatic index \cite{M2000}. Upper bounds on the $k$-independence number directly give lower bounds on the corresponding distance or packing chromatic number. Regarding it, Alon and Mohar \cite{AM2002} asked for the extremal value of the distance chromatic number for graphs of a given girth and degree.

In this paper we generalize and improve the known spectral upper bounds for the $k$-independence number from \cite{Fiolkindep} and \cite{act16}. For some cases, we also show that our bounds are sharp.


As far as we are aware, there seems to be some conflict in the existing literature regarding the use of the term `$k$-independence number'. The following list contains the three conflicting definitions, which all, nonetheless, are a natural generalization of the concept of independence number.

\begin{enumerate}
\item
Caro and Hansberg \cite{CaroHansbergkIndep} use the term `$k$-independence number' to denote the maximum size of a set of vertices in a graph whose induced subgraph has maximum degree $k$. Thus, $\alpha_0$ is the usual independence number.
\item
\v{S}pacapan \cite{Spacapankindep}  uses `$k$-independence number' to denote the size of the largest $k$-colourable subgraph of $G$. With this notation, $\alpha_1$ stands for the usual $k$-independence number of $G$.
\item
Fiol \cite{Fiolkindep} and Abiad, Tait, and Cioab\u{a}~\cite{act16} use `$k$-independence number' to denote the size of the largest set of vertices such that any two vertices in the set are at distance larger than $k$.
\end{enumerate}
The latter definition is the one we use in this work.

The first known spectral bound for the independence number $\alpha$ is due to Cvetkovi\'c \cite{c71}.

\begin{theorem}[Cvetkovi\'c \cite{c71}]
\label{thm:cvetkovic}
Let $G$ be a graph with eigenvalues $\lambda_1\ge \cdots \ge \lambda_n$. Then,
$$
\alpha\le \min \{|\{i : \lambda_i\ge 0\}| \quad \text{and} \quad |\{i : \lambda_i\le 0\}|\}.
$$
\end{theorem}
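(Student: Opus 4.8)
The natural tool is Cauchy (eigenvalue) interlacing applied to the adjacency matrix. I would let $\A$ be the adjacency matrix of $G$, so that its eigenvalues are $\lambda_1\ge\cdots\ge\lambda_n$, and let $S$ be an independent set with $|S|=\alpha$. The key observation is that the principal submatrix $\B$ of $\A$ indexed by the vertices of $S$ is the all-zero matrix of order $\alpha$, since no two vertices of $S$ are adjacent; hence every eigenvalue of $\B$ equals $0$.

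Next I would invoke the interlacing inequalities in the following form: if $\B$ is a principal submatrix of order $\alpha$ of the symmetric matrix $\A$ of order $n$, and the eigenvalues of $\B$ are $\mu_1\ge\cdots\ge\mu_\alpha$, then $\lambda_i\ge\mu_i\ge\lambda_{n-\alpha+i}$ for each $i\in\{1,\dots,\alpha\}$. Substituting $\mu_i=0$ and taking $i=\alpha$ gives $\lambda_\alpha\ge\mu_\alpha=0$, so $\lambda_1,\dots,\lambda_\alpha$ are all nonnegative and therefore $\alpha\le|\{i:\lambda_i\ge 0\}|$. Taking instead $i=1$ gives $0=\mu_1\ge\lambda_{n-\alpha+1}$, so $\lambda_{n-\alpha+1},\dots,\lambda_n$ are all nonpositive and $\alpha\le|\{i:\lambda_i\le 0\}|$. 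Combining the two bounds yields the claimed inequality.

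The argument is short, and the only genuine obstacle is having the interlacing theorem available in the right form; everything else is bookkeeping. It is worth emphasizing, though, that this is precisely the mechanism we intend to generalize in the rest of the paper: replace the independent set by a $k$-independent set, replace $\A$ by a suitable polynomial $p(\A)$ of degree at most $k$ (so that vertices at mutual distance greater than $k$ again force the corresponding principal submatrix to have a controlled structure, since the $(u,v)$ entry of $p(\A)$ vanishes when $\dist(u,v)>k$), and then read off an interlacing bound on the eigenvalues of that submatrix. I would therefore present the proof in a way that foregrounds this pattern, so that the later, more technical results appear as refinements of the same idea rather than as ad hoc computations.
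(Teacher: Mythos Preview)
Your proof is correct. The paper does not actually supply its own proof of this statement---it is quoted as a classical result of Cvetkovi\'c---but your argument is exactly the $k=1$, $p(x)=x$ specialization of the interlacing proof the paper gives for its generalization (Theorem~\ref{theo1}), and your closing remark about replacing $\A$ by $p(\A)$ anticipates that generalization precisely.
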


Another well-known result is the following bound due to Hoffman (unpublished; see for instance Haemers  \cite{h95}).

\begin{theorem}[Hoffman \cite{h95}]
\label{thm:hoffman}
If $G$ is a regular graph on $n$ vertices with eigenvalues $\lambda_1\ge \cdots \ge \lambda_n$, then
\[
\alpha \leq n\frac{-\lambda_n }{\lambda_1 - \lambda_n}.
\]
\end{theorem}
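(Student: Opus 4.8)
The plan is to run the classical Rayleigh-quotient argument that exploits the single structural fact available: an independent set spans no edges. Let $\A$ denote the adjacency matrix of the $d$-regular graph $G$, so that $\A\j=d\j=\lambda_1\j$ with $\j$ the all-ones vector, and let $S$ be an independent set of size $\alpha$ with characteristic vector $\x\in\{0,1\}^n$. Since no two vertices of $S$ are adjacent, $\x^{\top}\A\x=0$. First I would decompose $\x$ as $\x=\frac{\alpha}{n}\j+\z$ with $\z\perp\j$; from $\|\x\|^2=\alpha$ this gives $\|\z\|^2=\alpha-\frac{\alpha^2}{n}=\alpha(1-\frac{\alpha}{n})$.

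Next I would expand the quadratic form. The cross term $\frac{2\alpha}{n}\j^{\top}\A\z$ vanishes because $\A\j=d\j$ and $\z\perp\j$, and $\j^{\top}\A\j=dn$, so
\[
0=\x^{\top}\A\x=\frac{\alpha^2}{n^2}\j^{\top}\A\j+\z^{\top}\A\z=\frac{\alpha^2 d}{n}+\z^{\top}\A\z\ \ge\ \frac{\alpha^2 d}{n}+\lambda_n\|\z\|^2,
\]
where the inequality is the Rayleigh bound $\z^{\top}\A\z\ge\lambda_n\|\z\|^2$. Substituting $\|\z\|^2=\alpha(1-\alpha/n)$, dividing by $\alpha>0$, and rearranging yields $\alpha(d-\lambda_n)\le -n\lambda_n$. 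If $G$ has at least one edge then $d>\lambda_n$, so dividing gives $\alpha\le n\frac{-\lambda_n}{\lambda_1-\lambda_n}$; the edgeless case is trivial.

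I do not anticipate a genuine obstacle: the only points requiring care are the vanishing of the cross term (immediate from the symmetry of $\A$ and $\A\j=d\j$) and the strict inequality $d-\lambda_n>0$ needed before the final division. An alternative route, more in the spirit of the interlacing techniques used later in the paper, is to apply eigenvalue interlacing to the quotient matrix $\B=\left(\begin{smallmatrix} 0 & d \\ \frac{\alpha d}{n-\alpha} & d-\frac{\alpha d}{n-\alpha} \end{smallmatrix}\right)$ of the partition $\{S,V\setminus S\}$; its eigenvalues are $d$ and $-\frac{\alpha d}{n-\alpha}$, and interlacing forces $-\frac{\alpha d}{n-\alpha}\ge\lambda_n$, which rearranges to the same inequality. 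I would present the Rayleigh-quotient proof for brevity and mention the interlacing one, since the latter is the version that generalizes to the $k$-independence number studied in the rest of the paper.
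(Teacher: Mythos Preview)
Your argument is correct. In the paper itself Hoffman's bound is not proved from scratch; it is recovered in the subsection ``The case $k=1$'' by specializing Theorem~\ref{theo2} to the polynomial $p(x)=x$, whose proof is exactly the quotient-matrix interlacing computation you sketch as your alternative route (the $2\times 2$ quotient matrix of the partition $\{S,V\setminus S\}$ with eigenvalues $\lambda_1$ and $-\frac{\alpha\lambda_1}{n-\alpha}$, forced by interlacing to lie above $\lambda_n$). Your primary Rayleigh-quotient argument---decompose the characteristic vector along $\j$ and its orthogonal complement and bound $\z^{\top}\A\z\ge\lambda_n\|\z\|^2$---is an equally standard and slightly more elementary proof, but it is the interlacing version that the paper adopts precisely because it extends verbatim to $p(\A)$ for arbitrary $p\in\Re_k[x]$ and hence to $\alpha_k$. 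Since you already identify this, there is nothing to fix; if you want to match the paper, simply promote the interlacing paragraph to the main proof.
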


Regarding the $k$-independence number, the following three results are known. The first is due to Fiol \cite{Fiolkindep} and requires a preliminary definition. Let $G$ be a graph with distinct eigenvalues $\theta_0 > \cdots > \theta_d$. Let $P_k(x)$ be chosen among all polynomials $p(x) \in \Re_k(x)$, that is, polynomials of real coefficients and degree at most $k$, satisfying $|p(\theta_i)| \leq 1$ for all $i = 1,...,d$, and such that $P_k(\theta_0)$ is maximized. The polynomial $P_k(x)$ defined above is called the {\em $k$-alternating polynomial} of $G$ and  was shown to be unique in \cite{fgy96}, where it was used to study the relationship between the spectrum of a graph and its diameter.

\begin{theorem}[Fiol \cite{Fiolkindep}] \label{thm:fiol}
Let $G$ be a $d$-regular graph on $n$ vertices, with distinct eigenvalues $\theta_0 >\cdots > \theta_d$ and let $P_k(x)$ be its $k$-alternating polynomial. Then,
\[
\alpha_k \leq \frac{2n}{P_k(\theta_0) + 1}.
\]
\end{theorem}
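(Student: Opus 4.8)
The plan is to fix a set $S$ of vertices at pairwise distance greater than $k$ with $|S|=\alpha_k$, let $\mathbf{e}_S\in\{0,1\}^{n}$ be its characteristic vector, and then trap the quantity $\mathbf{e}_S^{\top}P_k(A)\mathbf{e}_S$ between a lower and an upper estimate, where $A$ is the adjacency matrix. Two facts get the argument off the ground. First, since $P_k$ has degree at most $k$ and any two distinct vertices of $S$ are at distance more than $k$, every off-diagonal entry of $P_k(A)$ indexed by $S$ vanishes, so that $\mathbf{e}_S^{\top}P_k(A)\mathbf{e}_S=\sum_{u\in S}(P_k(A))_{uu}$. Second, because $G$ is regular, the all-ones vector $\mathbf{j}$ is an eigenvector of $P_k(A)$ with eigenvalue $P_k(\theta_0)$, while on the hyperplane $\mathbf{j}^{\perp}$ the matrix $P_k(A)$ has eigenvalues $P_k(\theta_1),\dots,P_k(\theta_d)$, all of absolute value at most $1$ by the defining extremal property of the $k$-alternating polynomial.

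For the lower estimate I would decompose $\mathbf{e}_S=\tfrac{\alpha_k}{n}\mathbf{j}+\mathbf{z}$ with $\mathbf{z}\perp\mathbf{j}$ and $\|\mathbf{z}\|^{2}=\alpha_k-\alpha_k^{2}/n$; then $\mathbf{e}_S^{\top}P_k(A)\mathbf{e}_S=\tfrac{\alpha_k^{2}}{n}P_k(\theta_0)+\mathbf{z}^{\top}P_k(A)\mathbf{z}\ge\tfrac{\alpha_k^{2}}{n}P_k(\theta_0)-\|\mathbf{z}\|^{2}=\tfrac{\alpha_k^{2}}{n}\bigl(P_k(\theta_0)+1\bigr)-\alpha_k$, using that $P_k(A)$ restricted to $\mathbf{j}^{\perp}$ has norm at most $1$. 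For the upper estimate I would use the spectral resolution $P_k(A)=\sum_{i}P_k(\theta_i)E_i$ together with $(E_0)_{uu}=1/n$, $(E_i)_{uu}\ge0$ and $\sum_i(E_i)_{uu}=1$, to bound each diagonal entry by $(P_k(A))_{uu}\le\tfrac{P_k(\theta_0)}{n}+\bigl(1-\tfrac1n\bigr)$, whence $\sum_{u\in S}(P_k(A))_{uu}\le\alpha_k+\tfrac{\alpha_k}{n}\bigl(P_k(\theta_0)-1\bigr)$. Comparing the two estimates and dividing by $\alpha_k$ gives $\alpha_k\le\bigl(2n+P_k(\theta_0)-1\bigr)/\bigl(P_k(\theta_0)+1\bigr)$; the same family of inequalities arises, if preferred, by applying eigenvalue interlacing to the $2\times2$ quotient matrix of $P_k(A)$ for the partition $\{S,\,V\setminus S\}$, whose two eigenvalues are $P_k(\theta_0)$ and a number that interlacing confines to $[-1,1]$.

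The one genuinely delicate point — and the place where care is needed to land exactly on $2n/(P_k(\theta_0)+1)$ rather than on the slightly weaker bound above, which exceeds it by $(P_k(\theta_0)-1)/(P_k(\theta_0)+1)<1$ — is a finer control of the diagonal sum $\sum_{u\in S}(P_k(A))_{uu}$, which one wants to bound essentially by $\alpha_k$. Here Cauchy interlacing for $P_k(A)$ shows that its second largest eigenvalue is at most $1$, so at most one diagonal entry of $P_k(A)$ can exceed $1$; and if $\alpha_k\ge2$ then $G$ has diameter greater than $k$, so by the diameter bound of \cite{fgy96} one has $P_k(\theta_0)\le n-1$, which pins down the size of that single exceptional entry. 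Once this is absorbed, the estimate $\sum_{u\in S}(P_k(A))_{uu}\le\alpha_k$ combines with the lower estimate to give $\tfrac{\alpha_k^{2}}{n}\bigl(P_k(\theta_0)+1\bigr)\le 2\alpha_k$, that is $\alpha_k\le 2n/(P_k(\theta_0)+1)$; the case $\alpha_k\le1$ is trivial. I expect this diagonal bookkeeping to be the main obstacle, everything else being routine.
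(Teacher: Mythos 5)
Your overall strategy is sound up to a point, and the two estimates you do justify are correct: the off-diagonal entries of $P_k(\A)$ indexed by $S$ vanish, the Rayleigh-quotient lower bound $\sum_{u\in S}(P_k(\A))_{uu}\ge \frac{\alpha_k^2}{n}\bigl(P_k(\theta_0)+1\bigr)-\alpha_k$ is fine, and the diagonal upper bound $(P_k(\A))_{uu}\le \frac{P_k(\theta_0)}{n}+1-\frac{1}{n}$ is fine. The genuine gap is exactly at the step you flag as delicate: the claim $\sum_{u\in S}(P_k(\A))_{uu}\le\alpha_k$ is asserted but never established, and your proposed repair does not yield it. What you actually prove is: (a) among the diagonal entries indexed by $S$ at most one exceeds $1$ (note that your stronger phrasing, that at most one diagonal entry of $P_k(\A)$ anywhere exceeds $1$, is false in general --- interlacing on a $2\times 2$ principal submatrix only caps the \emph{smaller} of the two diagonal entries when the off-diagonal entry is zero, which you have only for pairs inside $S$); and (b) via $P_k(\theta_0)\le n-1$, the one exceptional entry is at most $\frac{P_k(\theta_0)}{n}+1-\frac1n\le 2-\frac2n$. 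Together these give only $\sum_{u\in S}(P_k(\A))_{uu}\le \alpha_k+1-\frac2n$, which leads to $\alpha_k^2(P_k(\theta_0)+1)\le n(2\alpha_k+1)$ rather than $\le 2n\alpha_k$, i.e.\ to a bound weaker than $\frac{2n}{P_k(\theta_0)+1}$ by an additive term that is not less than $1$ in general, so integrality of $\alpha_k$ cannot absorb it either. The same wall appears in your alternative $2\times2$ quotient-matrix formulation: the relevant quotient eigenvalue involves $\frac1r\sum_{u\in S}(P_k(\A))_{uu}$, and interlacing confines it to $[-1,1]$ only to reproduce the same pair of inequalities.

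This is precisely the obstruction the paper's argument is built to avoid. The paper derives Theorem~\ref{thm:fiol} from Theorem~\ref{theo3} via Corollary~\ref{coro-antipod-P}: instead of evaluating $\f^\top P_k(\A)\f$ for the characteristic vector $\f$ of $S$ (which forces you to control the diagonal), it forms the matrix $\A(K_r)\otimes P_k(\A)$, whose diagonal blocks are zero, and applies the spectral decomposition to the stacked vector $\f_U$. The identity $\langle \B\f_U,\f_U\rangle=\sum_{i\ne j}(P_k(\A))_{u_iu_j}=0$ then uses \emph{only} the vanishing off-diagonal entries, and the diagonal of $P_k(\A)$ never enters; after normalizing $P_k$ so that $\Lambda(p)=r-1$ and $\lambda(p)=-1$, this yields exactly $\alpha_k\le \frac{2n}{P_k(\theta_0)+1}$ with no loss. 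To fix your proof you would need either this Kronecker-product device (or an equivalent way of isolating $\sum_{i\ne j}(P_k(\A))_{u_iu_j}$ from the diagonal), or an honest proof that $\sum_{u\in S}(P_k(\A))_{uu}\le\alpha_k$, which your current tools do not supply.
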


The second and third bounds are due to Abiad, Cioab\u{a}, and Tait \cite{act16}. The first is a Cvetkovi\'c-like approach, whereas the second resembles Hoffman's.

\begin{theorem}[Abiad, Cioab\u{a}, Tait \cite{act16}]
\label{previous1act}
Let $G$ be a graph on $n$ vertices with adjacency matrix $\A$, with eigenvalues $\lambda_1 \geq \cdots \geq \lambda_n$. Let $w_k$ and $W_k$ be respectively the smallest and the largest diagonal entries of $\A^k$. Then,
\[
\alpha_k \leq |\{i : \lambda_i^k \geq w_k(G)\}| \quad \text{and} \quad \alpha_k \leq |\{i : \lambda_i^k \leq W_k(G)\}|.
\]
\end{theorem}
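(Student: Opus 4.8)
The plan is to combine a walk-counting observation with eigenvalue interlacing applied to the power $\A^k$. First I would fix a set $S$ of vertices realizing $\alpha_k$, so that $|S|=\alpha_k$ and any two distinct vertices of $S$ are at distance greater than $k$. The crucial point is that for distinct $u,v\in S$ the $(u,v)$-entry of $\A^k$ counts the walks of length exactly $k$ from $u$ to $v$, and there are none: a walk of length $k$ would force $\dist(u,v)\le k$. Hence the principal submatrix $\B$ of $\A^k$ indexed by $S$ is a \emph{diagonal} matrix whose diagonal entries are the numbers $(\A^k)_{uu}$ for $u\in S$; in particular every eigenvalue of $\B$ lies in the interval $[w_k(G),W_k(G)]$.

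Next I would recall that $\A^k$ is a real symmetric matrix whose eigenvalues are $\lambda_1^k,\dots,\lambda_n^k$; write $\mu_1\ge\cdots\ge\mu_n$ for this list sorted in nonincreasing order. (The sorting step matters, since $x\mapsto x^k$ need not be monotone when $k$ is even, so $\lambda_1^k,\dots,\lambda_n^k$ need not already be decreasing.) Eigenvalue interlacing, applied to the $\alpha_k\times\alpha_k$ principal submatrix $\B$ of $\A^k$, gives for every $i$
$$
\mu_i \ \ge\ \eta_i \ \ge\ \mu_{n-\alpha_k+i},
$$
where $\eta_1\ge\cdots\ge\eta_{\alpha_k}$ denote the eigenvalues of $\B$. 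Taking $i=\alpha_k$ and using $\eta_{\alpha_k}\ge w_k(G)$ yields $\mu_{\alpha_k}\ge w_k(G)$, so $\A^k$ has at least $\alpha_k$ eigenvalues that are $\ge w_k(G)$; equivalently $\alpha_k\le |\{i:\lambda_i^k\ge w_k(G)\}|$. Symmetrically, taking $i=1$ and using $\eta_1\le W_k(G)$ gives $\mu_{n-\alpha_k+1}\le W_k(G)$, so $\A^k$ has at least $\alpha_k$ eigenvalues that are $\le W_k(G)$, i.e. $\alpha_k\le |\{i:\lambda_i^k\le W_k(G)\}|$.

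The walk-counting identity and the statement of interlacing are routine; the only place requiring care is the bookkeeping between the sorted spectrum $\mu_1\ge\cdots\ge\mu_n$ of $\A^k$ and the unsorted list $\lambda_1^k,\dots,\lambda_n^k$, and making sure each end of the interlacing inequality is paired with the correct extreme eigenvalue of the diagonal matrix $\B$. As a sanity check, for $k=1$ one has $\A^1=\A$ and $w_1(G)=W_1(G)=0$ for a loopless graph, so the statement specializes exactly to Cvetkovi\'c's bound in Theorem~\ref{thm:cvetkovic}. I do not expect any serious obstacle beyond this bookkeeping.
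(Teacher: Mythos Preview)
Your proof is correct and follows the same interlacing approach the paper uses. The paper does not re-prove Theorem~\ref{previous1act} directly (it is cited from \cite{act16}), but its proof of the generalization Theorem~\ref{theo1} is exactly your argument with an arbitrary $p\in\Re_k[x]$ in place of $x^k$: the principal submatrix of $p(\A)$ on a $k$-independent set is diagonal, and interlacing against the sorted spectrum of $p(\A)$ yields the two inequalities.
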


\begin{theorem}[Abiad, Cioab\u{a}, Tait \cite{act16}]\label{previous2act}
\label{thm:abiad}
Let $G$ be a $\delta$-regular graph on $n$ vertices with adjacency matrix $\A$, whose distinct eigenvalues are $\theta_0(=\delta) > \cdots> \theta_d$. Let $\widetilde{W_k}$ be the largest diagonal entry of $\A+\A^2+\cdots+\A^k$. Let $\theta = \max\{|\theta_1| , |\theta_d|\}$. Then,
\[
\label{aida}
\alpha_k \leq n \frac{\widetilde{W_k}+ \sum_{j = 1}^k \theta^j}{\sum_{j = 1}^k \delta^j + \sum_{j = 1}^k\theta^j}.
\]
\end{theorem}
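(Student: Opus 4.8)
The plan is to imitate the classical proof of Hoffman's bound (Theorem~\ref{thm:hoffman}), but with the adjacency matrix $\A$ replaced by
$$
\B \;=\; \A+\A^2+\cdots+\A^k \;=\; p(\A),\qquad p(x)=\sum_{j=1}^{k}x^{j}.
$$
The reason to work with $\B$ is a walk‑counting observation: a set $S$ of vertices is $k$‑independent exactly when, for all distinct $u,v\in S$ and all $1\le j\le k$, there is no $u$–$v$ walk of length $j$, i.e.\ $(\A^{j})_{uv}=0$, and hence $\B_{uv}=0$. Therefore the principal submatrix $\B_S$ is \emph{diagonal}, with entries drawn from the diagonal of $\B$. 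Taking $S$ of maximum size, so $s:=|S|=\alpha_k$, and letting $\x=\chi_S$ be its characteristic vector, this gives the upper estimate
$$
\x^{\top}\B\x \;=\; \sum_{u\in S}\B_{uu} \;\le\; s\,\widetilde{W_k}.
$$
This is exactly the step where the hypothesis ``pairwise distance $>k$'' is used, and it explains why the bound is phrased with $\sum_{j=1}^{k}\A^{j}$ rather than with $\A^{k}$ alone.

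For the matching lower estimate I would split $\x$ along the all‑ones vector: write $\x=\frac{s}{n}\j+\z$ with $\z\perp\j$. Since $G$ is $\delta$‑regular, $\j$ is the Perron eigenvector and $\B\j=\big(\sum_{j=1}^{k}\delta^{j}\big)\j$, so $\j^{\top}\B\j=n\sum_{j=1}^{k}\delta^{j}$. On the orthogonal complement of $\j$ the operator $\B$ acts with eigenvalues $p(\theta_i)=\sum_{j=1}^{k}\theta_i^{j}$, $i=1,\dots,d$; because $|\theta_i|\le\theta$ one has $p(\theta_i)\ge-\sum_{j=1}^{k}|\theta_i|^{j}\ge-\sum_{j=1}^{k}\theta^{j}$, whence $\z^{\top}\B\z\ge-\big(\sum_{j=1}^{k}\theta^{j}\big)\|\z\|^{2}$. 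Combining the two evaluations of $\x^{\top}\B\x$ and using $\|\z\|^{2}=\|\x\|^{2}-\frac{s^{2}}{n}=s-\frac{s^{2}}{n}$ yields
$$
s\,\widetilde{W_k}\;\ge\;\frac{s^{2}}{n}\sum_{j=1}^{k}\delta^{j}\;-\;\Big(\sum_{j=1}^{k}\theta^{j}\Big)\Big(s-\frac{s^{2}}{n}\Big).
$$
Dividing by $s>0$ and rearranging to isolate $\tfrac{s}{n}$ (the denominator $\sum_{j=1}^k\delta^j+\sum_{j=1}^k\theta^j$ is positive) gives precisely $\alpha_k=s\le n\,\dfrac{\widetilde{W_k}+\sum_{j=1}^{k}\theta^{j}}{\sum_{j=1}^{k}\delta^{j}+\sum_{j=1}^{k}\theta^{j}}$.

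The computation is short, so the only genuinely delicate points are bookkeeping ones. First, the estimate $p(\theta_i)\ge-\sum_{j=1}^{k}\theta^{j}$ must be obtained via the triangle inequality on $|p(\theta_i)|$, not by termwise comparison, since the terms $\theta_i^{j}$ alternate in sign when $\theta_i<0$. Second, for the splitting argument to use exactly the eigenvalues $\theta_1,\dots,\theta_d$ on $\j^{\perp}$ one needs $\theta_0=\delta$ to be a simple eigenvalue, which holds when $G$ is connected (if $G$ is disconnected one applies the bound componentwise, or observes that $\j$ is still an eigenvector with $\A^{j}\j=\delta^{j}\j$). An essentially equivalent route is eigenvalue interlacing applied to the quotient of $\B$ for the partition $\{S,\,V\setminus S\}$, but the Rayleigh‑quotient computation above is cleaner because it sidesteps having to identify the second‑largest eigenvalue of $\B$, which is awkward since $p$ is not monotone on $[\theta_d,\theta_0]$.
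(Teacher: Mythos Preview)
Your proof is correct and is essentially the same argument the paper uses: Theorem~\ref{thm:abiad} is recovered there from Theorem~\ref{theo2}/Corollary~\ref{cor:pos} by taking $p(x)=\sum_{j=1}^{k}x^{j}+\sum_{j=1}^{k}\theta^{j}$, and Theorem~\ref{theo2} is proved via interlacing for the quotient of $p(\A)$ with respect to the partition $\{S,V\setminus S\}$---the route you yourself mention as ``essentially equivalent'' to your Rayleigh-quotient computation. One small remark: your worry about disconnected $G$ is unnecessary, since even if $\theta_0=\delta$ contributes eigenvectors to $\j^{\perp}$ the inequality $p(\delta)\ge -\sum_{j=1}^{k}\theta^{j}$ still holds and the bound $\z^{\top}\B\z\ge -\bigl(\sum_{j=1}^{k}\theta^{j}\bigr)\|\z\|^{2}$ goes through unchanged.
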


\section{Preliminaries}
For basic notation and results see \cite{biggs,g93}. Let $G=(V,E)$ be a graph with $n=|V|$ vertices, $m=|E|$ edges, and adjacency matrix $\A$ with  spectrum $\spec G=\{\theta_0>\theta_1^{m_1}>\cdots>\theta_d^{m_d}\}$. When the eigenvalues are presented with possible repetitions, we shall indicate them by $\lambda_1 \geq \lambda_2 \geq \cdots\geq \lambda_n$.
Let us consider the scalar product in $\Re_d[x]$:
$$
\langle f,g\rangle_G=\frac{1}{n}\tr(f(\A)g(\A))=\frac{1}{n}\sum_{i=0}^{d} m_i f(\theta_i)g(\theta_i).
$$
The so-called {\em predistance polynomials} $p_0(=1),p_1,\ldots, p_d$ are a sequence of orthogonal polynomials with respect to the above product, with  $\dgr p_i=i$, and are normalized in such a way that $\|p_i\|_G^2=p_i(\theta_0)$ (this makes sense since it is known that $p_i(\theta_0)>0$) for $i=0,\ldots,d$. Therefore they are uniquely determined, for instance, following the Gram-Schmidt process. They were introduced by Fiol and Garriga in \cite{fg97} to prove the so-called `spectral excess theorem' for distance-regular graphs. We also use the sum polynomials $q_i=p_0+\cdots+p_i$, for $i=0,\ldots, d-1$, which are also a sequence of orthogonal polynomials, now with respect to the scalar product
$$
\langle f,g\rangle_{[G]}=\frac{1}{n}\sum_{i=0}^{d-1} m_i(\theta_0-\theta_i) f(\theta_i)g(\theta_i),
$$
and satisfy $1=q_0(\theta_0)<q_1(\theta_0)<\cdots<q_{d-1}(\theta_0)<n$.
 See \cite{cffg09} for further details and applications.

Eigenvalue interlacing is a powerful and old technique that has found countless applications in combinatorics and other fields. This technique will be used in several of our proofs. For more details, historical remarks and other applications see Fiol and Haemers \cite{f99,h95}.

Given square matrices $\A$ and $\B$ with respective eigenvalues $\lambda_1\geq \cdots \geq \lambda_n$ and $\mu_1 \geq \cdots \geq \mu_m$, with $m<n$, we say that the second sequence {\em interlaces} the first if, for all $i = 1,\ldots,m$, it follows that
\[
\lambda_i \geq \mu_i \geq \lambda_{n-m+i}.
\]

\begin{theorem}[Interlacing \cite{f99,h95}]
\label{theo-interlacing}
Let $\S$ be a real $n \times m$ matrix such that $\S^T \S = \I$, and let $\A$ be a $n \times n$ matrix with eigenvalues $\lambda_1 \geq \cdots \geq \lambda_n$. Define $\B = \S^T \A \S$, and call its eigenvalues $\mu_1 \geq\cdots \geq \mu_m$. Then,
\begin{enumerate}[(i)]
\item
The eigenvalues of $\B$ interlace those of $\A$.
\item
If $\mu_i = \lambda_i$ or $\mu_i = \lambda_{n-m+i}$, then there is an eigenvector $\vecv$ of $\B$ for $\mu_i$ such that $\S \vecv$ is eigenvector of $\A$ for $\mu_i$.
\item
If there is an integer $k \in \{0,\ldots,m\}$ such that $\lambda_i = \mu_i$ for $1 \leq i \leq k$,  and $\mu_i = \lambda_{n-m+i}$ for $ k+1 \leq i \leq m$ $(${\em tight interlacing}$)$,  then $\S \B = \A \S$.
\end{enumerate}
\end{theorem}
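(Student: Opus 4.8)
\medskip
\noindent\textbf{Proof proposal.}
The plan is to prove (i) by the standard subspace dimension count against Rayleigh quotients (the engine behind Courant--Fischer interlacing), to read off (ii) from the equality case of that count, and to obtain (iii) by induction on $m$ with (ii) as the inductive step. The single structural input is that $\S$ has orthonormal columns, hence embeds $\Re^m$ isometrically into $\Re^n$: for every $\vecv\in\Re^m$ one has $\vecv^T\B\vecv=(\S\vecv)^T\A(\S\vecv)$ and $\|\S\vecv\|^2=\vecv^T\S^T\S\vecv=\|\vecv\|^2$, and $\S\vecv=\vec0$ only when $\vecv=\vec0$. Fix orthonormal eigenbases $\vecu_1,\dots,\vecu_m$ of $\B$ with $\B\vecu_j=\mu_j\vecu_j$ and $\vecx_1,\dots,\vecx_n$ of $\A$ with $\A\vecx_j=\lambda_j\vecx_j$. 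For part (i), fix $i$: the subspace $\S\langle\vecu_1,\dots,\vecu_i\rangle$ of $\Re^n$ has dimension $i$ and $\langle\vecx_i,\dots,\vecx_n\rangle$ has dimension $n-i+1$, so they share a nonzero vector, say $\S\vecv$ with $\vec0\neq\vecv\in\langle\vecu_1,\dots,\vecu_i\rangle$. Since $\w^T\B\w\ge\mu_i\|\w\|^2$ for $\w\in\langle\vecu_1,\dots,\vecu_i\rangle$ and $\y^T\A\y\le\lambda_i\|\y\|^2$ for $\y\in\langle\vecx_i,\dots,\vecx_n\rangle$,
\[
\mu_i\|\vecv\|^2\ \le\ \vecv^T\B\vecv\ =\ (\S\vecv)^T\A(\S\vecv)\ \le\ \lambda_i\|\S\vecv\|^2\ =\ \lambda_i\|\vecv\|^2 ,
\]
so $\mu_i\le\lambda_i$; running the same argument for $-\A$ and $-\B=\S^T(-\A)\S$ (equivalently, intersecting $\S\langle\vecu_i,\dots,\vecu_m\rangle$ with $\langle\vecx_1,\dots,\vecx_{n-m+i}\rangle$, whose dimensions sum to $n+1$) gives $\mu_i\ge\lambda_{n-m+i}$.

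For part (ii), suppose $\mu_i=\lambda_i$. Then the displayed chain becomes a chain of equalities, so $\vecv^T\B\vecv=\mu_i\|\vecv\|^2$ and $(\S\vecv)^T\A(\S\vecv)=\lambda_i\|\S\vecv\|^2$. Writing $\vecv=\sum_{j\le i}c_j\vecu_j$, the first equality says $\sum_{j\le i}c_j^2(\mu_j-\mu_i)=0$ with all summands nonnegative, so $c_j=0$ whenever $\mu_j>\mu_i$; hence $\B\vecv=\mu_i\vecv$. Writing $\S\vecv=\sum_{j\ge i}d_j\vecx_j$ (valid since $\S\vecv\in\langle\vecx_i,\dots,\vecx_n\rangle$), the second equality forces $d_j=0$ whenever $\lambda_j<\lambda_i$, so $\A(\S\vecv)=\lambda_i(\S\vecv)=\mu_i(\S\vecv)$. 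Thus $\vecv$ is the required eigenvector, and the case $\mu_i=\lambda_{n-m+i}$ follows by applying this to $-\A$ and $-\B$.

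For part (iii) I would induct on $m$. Assume tight interlacing. If $k\ge1$ then $\mu_1=\lambda_1$, and by (ii) there is a unit $\vecv$ with $\B\vecv=\mu_1\vecv$ and $\A(\S\vecv)=\mu_1(\S\vecv)$; if $k=0$ then $\mu_m=\lambda_n$ and (ii) gives a unit $\vecv$ with $\B\vecv=\mu_m\vecv$ and $\A(\S\vecv)=\mu_m(\S\vecv)$. In either case $\langle\vecv\rangle$ is $\B$-invariant and $\langle\S\vecv\rangle$ is $\A$-invariant, so the orthogonal complements $U:=\vecv^{\perp}\subset\Re^m$ and $W:=(\S\vecv)^{\perp}\subset\Re^n$ are invariant under $\B$ and $\A$ respectively, $\S$ maps $U$ isometrically into $W$, and the restriction $\S_0:=\S|_U$ satisfies $\S_0^T(\A|_W)\S_0=\B|_U$. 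The spectra of $\A|_W$ and $\B|_U$ are those of $\A$ and $\B$ with one copy of the peeled eigenvalue deleted, and a short check shows $(\A|_W,\B|_U)$ again satisfies tight interlacing, now with split index $k-1$ (resp.\ still $0$). The inductive hypothesis gives $(\A|_W)\S_0=\S_0(\B|_U)$, i.e.\ $\A\S\w=\S\B\w$ for all $\w\in U$; combined with $\A\S\vecv=\S\B\vecv$ and $\Re^m=\langle\vecv\rangle\oplus U$, this yields $\A\S=\S\B$.

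The one genuinely substantive step is (ii): extracting, from the equality case of the interlacing inequality, that the vector supplied by the dimension count is an actual eigenvector of \emph{both} $\B$ and $\A$, linked by $\S$. Given that, (iii) is essentially bookkeeping, the only delicate point being to confirm that the compressions $\A|_W$ and $\B|_U$ inherit \emph{tight} interlacing and to track how the split index moves. As an alternative, one may complete $\S$ to an orthogonal matrix $[\,\S\ \S'\,]$; then $[\,\S\ \S'\,]^{T}\A\,[\,\S\ \S'\,]$ is similar to $\A$ and has $\B$ as a leading principal submatrix, so (i) is Cauchy's interlacing theorem for principal submatrices and (ii)--(iii) reduce to determining when the off-diagonal block $(\S')^{T}\A\S$ vanishes.
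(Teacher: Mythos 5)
Your argument is correct. Note that the paper does not prove this theorem at all: it is quoted as a known result with citations to Fiol \cite{f99} and Haemers \cite{h95}, and your proof is essentially the standard one found in those references --- the Courant--Fischer/Rayleigh-quotient dimension count for (i), the equality analysis of that chain for (ii), and the peel-one-eigenvalue induction for (iii). The two points you flag as delicate (that the vector produced by the dimension count is a genuine common eigenvector in the equality case, and that the compressions $\A|_W$, $\B|_U$ inherit tight interlacing with split index $k-1$, resp.\ $0$) both check out, so nothing is missing.
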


Two interesting particular cases where interlacing occurs (obtained by choosing
appropriately the matrix $\S$) are the following. Let $\A$ be the adjacency matrix of a graph $G=(V,E)$. First, if $\B$ is a principal submatrix of $\A$, then $\B$ corresponds to the adjacency matrix of an induced subgraph
$G'$ of $G$. Second, when, for a given partition of the vertices of $\G$, say $V=U_1\cup\cdots\cup U_m$, $\B$ is the so-called {\em quotient matrix} of $\A$, with elements
$b_{ij}$, $i,j=1,\ldots,m$, being the average row sums of the corresponding block $\A_{ij}$ of $\A$. Actually, the quotient matrix $\B$ does not need to be
symmetric or equal to $\S^\top\A\S$, but in this case $\B$ is
similar to---and therefore has the same spectrum as---
$\S^\top\A\S$.
In the second case, if the interlacing is tight,
Theorem~\ref{theo-interlacing}$(iii)$ reflects that $\S$ corresponds to a {\em
regular} (or {\em equitable}) partition of $\A$, that is, each
block of the partition has constant row and column sums. Then
the bipartite induced subgraphs $G_{ij}$, with adjacency matrices $\A_{ij}$, for $i\neq i$, are biregular, and the subgraphs
$G_{ii}$ are regular.

We finally recall that the {\it Kronecker product} of two matrices
$\A=(a_{ij})$ and $\B$, denoted by $\A\otimes \B$, is obtained by
replacing each entry $a_{ij}$  with the  matrix
$a_{ij}\B$, for all $i$ and $j$. Then, if $\vecu$ and $\vecv$ are eigenvectors
of $\A$ and $\B$, with corresponding eigenvalues $\lambda$ and
$\mu$, respectively, then $\vecu\otimes \vecv$ (seeing $\vecu$ and $\vecv$  as matrices) is an eigenvector of $\A\otimes \B$, with eigenvalue
$\lambda\mu$.

\section{Three main results}

The  objective  of  this  section  is  to  obtain  three general spectral upper  bounds  for  $\alpha_k$. Our first Theorem \ref{theo1} is  a  very  general  bound. Since it depends on a certain polynomial $p\in \Re_k[x]$, it is difficult to study when it is sharp in general, but it can be seen as a generalization on the previous Theorem \ref{previous1act}. Our second Theorem \ref{theo2} is a significant improvement to Theorem \ref{previous2act} and is sharp for some values of $k$, as shown using computer-assisted calculations. Finally, our last Theorem \ref{theo3} provides an antipodal-like bound that generalizes Theorem \ref{thm:fiol}.

Let $G$ be a graph with eigenvalues $\lambda_1\ge \lambda_2\ge \cdots\ge \lambda_n$. Let $[2,n]=\{2,3,\ldots,n\}$. Given a polynomial $p\in \Re_k[x]$, we define the following parameters:
\begin{itemize}
\item
$W(p) = \max_{u\in V}\{(p(\A))_{uu}\}$;
\item
$w(p) = \min_{u\in V}\{(p(\A))_{uu}\}$;
\item
$\Lambda(p) = \max_{i\in[2,n]}\{p(\lambda_i)\}$;
\item
$\lambda(p) = \min_{i\in[2,n]}\{p(\lambda_i)\}$.
\end{itemize}

In the following three results, $G$ is a graph with $n$ vertices, adjacency matrix $\A$ and eigenvalues $\lambda_1\ge \lambda_2\ge \cdots\ge \lambda_n$. Let  $p\in \Re_k[x]$ with corresponding parameters $W(p)$, $w(p)$, $\Lambda(p)$ and $\lambda(p)$.

\subsection{A Cvetkovi\'c-like bound}

\begin{theorem}
\label{theo1}
Let  $p\in \Re_k[x]$ with corresponding parameters $W(p)$, $w(p)$. Then, the $k$-independence number of $G$ satisfies the bound
$$
\alpha_k\le \min \{|\{i : p(\lambda_i)\ge w(p)\}|, \ |\{i : p(\lambda_i)\le W(p)\}|.
$$
\end{theorem}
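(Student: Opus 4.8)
The plan is to mimic the classical interlacing proof of Cvetkovi\'c's bound, but applied to the matrix $p(\A)$ rather than $\A$ itself, and restricted to a maximum $k$-independent set. Let $U\subseteq V$ be a set of vertices attaining $\alpha_k$, so $|U|=\alpha_k$ and any two distinct vertices of $U$ are at distance greater than $k$ in $G$. The key structural observation is that, since $p$ has degree at most $k$, the entry $(p(\A))_{uv}$ is a linear combination of $(\A^j)_{uv}$ for $j=0,\ldots,k$, and $(\A^j)_{uv}$ counts walks of length $j$ from $u$ to $v$; hence $(\A^j)_{uv}=0$ whenever $\dist(u,v)>j$. Therefore, for distinct $u,v\in U$ we have $\dist(u,v)>k\ge j$ for all $j\le k$, so $(p(\A))_{uv}=0$. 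In other words, the principal submatrix $\B$ of $p(\A)$ indexed by $U$ is a diagonal matrix, with diagonal entries $(p(\A))_{uu}$ for $u\in U$.

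Next I would invoke interlacing (Theorem~\ref{theo-interlacing}): taking $\S$ to be the $n\times\alpha_k$ characteristic matrix of $U$ (so $\S^T\S=\I$), the matrix $\B=\S^T p(\A)\S$ is exactly that diagonal principal submatrix, and its eigenvalues $\mu_1\ge\cdots\ge\mu_{\alpha_k}$ interlace the eigenvalues of $p(\A)$. The eigenvalues of $p(\A)$ are precisely $p(\lambda_1),\ldots,p(\lambda_n)$ (with multiplicity), though not necessarily in sorted order; writing them sorted as $\nu_1\ge\cdots\ge\nu_n$, interlacing gives $\nu_i\ge\mu_i\ge\nu_{n-\alpha_k+i}$ for $i=1,\ldots,\alpha_k$. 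The eigenvalues $\mu_i$ of the diagonal matrix $\B$ are just the numbers $(p(\A))_{uu}$, $u\in U$, all of which lie in the interval $[w(p),W(p)]$ by definition of $w(p)$ and $W(p)$.

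Now I extract the bound. From $\mu_{\alpha_k}\ge w(p)$ and $\mu_{\alpha_k}\le\nu_{\alpha_k}$ (the $i=\alpha_k$ upper interlacing inequality is $\mu_{\alpha_k}\le\nu_n$; the useful one is the lower bound $\nu_{\alpha_k}\ge\mu_{\alpha_k}$, i.e. $\mu_{\alpha_k}\le \nu_{\alpha_k}$), we get $\nu_{\alpha_k}\ge w(p)$, so at least $\alpha_k$ of the values $p(\lambda_i)$ are $\ge w(p)$, giving $\alpha_k\le|\{i:p(\lambda_i)\ge w(p)\}|$. Symmetrically, from $\mu_1\le W(p)$ and $\mu_1\ge\nu_{n-\alpha_k+1}$ we get $\nu_{n-\alpha_k+1}\le W(p)$, so at least $\alpha_k$ of the values $p(\lambda_i)$ are $\le W(p)$, giving $\alpha_k\le|\{i:p(\lambda_i)\le W(p)\}|$. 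Taking the minimum of the two finishes the proof.

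I do not expect a serious obstacle here; the only point requiring care is the bookkeeping between the sorted eigenvalues $\nu_i$ of $p(\A)$ and the values $p(\lambda_i)$ (which may not be monotone in $i$ since $p$ need not be monotone on the spectrum), together with getting the direction of each interlacing inequality right. Everything else is a direct application of the walk-counting argument and Theorem~\ref{theo-interlacing}(i).
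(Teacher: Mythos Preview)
Your proposal is correct and follows essentially the same approach as the paper's proof: both restrict $p(\A)$ to a maximum $k$-independent set, observe that this principal submatrix is diagonal, and apply eigenvalue interlacing to compare its extreme diagonal entries (bounded by $w(p)$ and $W(p)$) with the eigenvalues $p(\lambda_i)$ of $p(\A)$. The paper's write-up is terser, but your added care about sorting the $p(\lambda_i)$ and tracking which interlacing inequality is used is exactly the bookkeeping the paper leaves implicit.
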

\begin{proof}
We use the interlacing approach. Assume $U$ is a $k$-independent set of $G$. We arrange the columns and rows of $\A$ to have the vertices of $U$ appearing in the first positions. This implies that, for any polynomial $p(x)$ of degree at most $k$, the principal submatrix with the first $|U|$ rows and columns of $p(\A)$ is diagonal. Call this matrix $\D$.  Choosing $\S^T = \left(\begin{array}{c|c} \I_k & \O \end{array}\right)$, we have
\[\S^T p(\A) \S = \D.\]
Let $\mu$ be the smallest eigenvalue of $\D$. From interlacing, it follows that there must be at least $|U|$ eigenvalues of $p(\A)$ larger than $\mu$. Noting that $w(p) \leq \mu$, we have $|U| \leq | \{ i : p(\lambda_i) \geq w(p)\}|$. The other bound is proved analogously.
\end{proof}

It is well known that Theorem \ref{thm:cvetkovic} (Cvetkovic's bound) holds for weighted adjacency matrices. Thus, in our result above, instead of talking about polynomials of degree at most $k$, we could simply say ``let $\M$ be any matrix whose support consists of entries corresponding to vertices at distance at most $k$\ldots". The downside of this approach is that it is in general quite hard to find the optimal $\M$. Our approach in this work is interesting if one can come up with a good choice for the polynomial $p\in \Re_k[x]$ or with an efficient method (like linear programming) to compute it in practice.

An analogous remark also applies to the next results, if one considers that the $k$-independence number of a graph $G$ is precisely the independence number of the graph formed by making all pairs of vertices of $G$ at distance at most $k$ adjacent. For this graph, say $G^{(k)}$, one can formulate an optimization problem over completely positive matrices whose optimal value is equal to its independence number \cite{klerkpasech}. The semidefinite relaxation of this programming yields the Lov\'{a}sz Theta number of $G^{(k)}$, which upper bounds $\alpha_k(G)$. The spectral bounds we find below can all be obtained as the objective value of some feasible solution to the minimization formulation of the Lov\'asz Theta semidefinite programming, therefore they are all larger or equal than the Lov\'{a}sz Theta number of $G^{(k)}$. We point however that computing our spectral bounds is significantly faster than solving an SDP, and in many cases they perform fairly good, as we will point in some tables below.

\subsection{A Hoffman-like bound}

\begin{theorem}
\label{theo2}
Let $G$ be a regular graph with $n$ vertices and eigenvalues $\lambda_1\ge\cdots\ge \lambda_n$.
Let  $p\in \Re_k[x]$ with corresponding parameters $W(p)$ and $\lambda(p)$, and assume $p(\lambda_1) > \lambda(p)$. Then,
\begin{equation}
\label{eq:thm2}
\alpha_k\le n\frac{W(p)-\lambda(p)}{p(\lambda_1)-\lambda(p)}.
\end{equation}
\end{theorem}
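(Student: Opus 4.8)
The plan is to mimic the classical proof of Hoffman's bound (Theorem~\ref{thm:hoffman}), but applied to the matrix $p(\A)$ instead of $\A$ itself, and using the quotient-matrix form of interlacing (Theorem~\ref{theo-interlacing}) rather than the principal-submatrix form used in Theorem~\ref{theo1}. Let $U$ be a $k$-independent set with $|U|=\alpha_k=:s$, and consider the vertex partition $V=U\cup (V\setminus U)$. Form the $n\times 2$ normalized characteristic matrix $\S$ of this partition (so $\S^T\S=\I_2$), and let $\B=\S^T p(\A)\S$ be the $2\times 2$ quotient matrix. The key structural fact is that, since the vertices of $U$ are at pairwise distance $>k$, the block of $p(\A)$ indexed by $U\times U$ is \emph{diagonal}, with entries $(p(\A))_{uu}\le W(p)$; hence the average row sum of that block, which is the $(1,1)$-entry of $\B$, is at most $W(p)$. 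The other entries of $\B$ are determined, up to the constraint that $\B$ has the same row-sum behaviour one expects, by the fact that $p(\A)\j = p(\lambda_1)\j$ for a regular graph, so the all-ones vector passes to the quotient: the row sums of $\B$ are both equal to $p(\lambda_1)$. This pins down $\B = \begin{pmatrix} b_{11} & p(\lambda_1)-b_{11} \\ * & * \end{pmatrix}$ with $b_{11}\le W(p)$ and $\tr(\B)$ controlled.

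Next I would extract the eigenvalue information. The largest eigenvalue of $\B$ is $\mu_1=p(\lambda_1)$ (with eigenvector the image of $\j$), and by interlacing the smaller eigenvalue satisfies $\mu_2\ge \lambda_n(p(\A))\ge \lambda(p)$, since every eigenvalue of $p(\A)$ other than $p(\lambda_1)$ is of the form $p(\lambda_i)$ for $i\ge 2$ and hence is at least $\lambda(p)=\min_{i\in[2,n]}p(\lambda_i)$. Now compute the trace of $\B$ two ways: on one hand $\tr(\B)=\mu_1+\mu_2 = p(\lambda_1)+\mu_2$; on the other hand $\tr(\B) = b_{11} + b_{22}$, where $b_{11}$ is the average of the diagonal entries of $p(\A)$ over $U$, so $b_{11}\le W(p)$, and $b_{22}$ is the average of the diagonal entries over $V\setminus U$. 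Here one must relate $b_{22}$ back to the eigenvalues. The cleanest route: use the weighted count $\frac{s}{n}b_{11} + \frac{n-s}{n}b_{22} = \frac{1}{n}\tr(p(\A)) = \frac{1}{n}\sum_i p(\lambda_i)$, which is a genuine identity since $\S\S^T$ averages the diagonal correctly; combine this with $b_{11}+b_{22}=p(\lambda_1)+\mu_2$ to eliminate $b_{22}$ and solve for $s/n$. This yields an inequality of the shape $s \le n\,\frac{W(p)-\mu_2}{p(\lambda_1)-\mu_2}$, and since the map $t\mapsto \frac{W(p)-t}{p(\lambda_1)-t}$ is monotonic on $t<p(\lambda_1)$ in the direction that makes replacing $\mu_2$ by the smaller quantity $\lambda(p)$ only weaken the bound, we conclude $\alpha_k = s \le n\,\frac{W(p)-\lambda(p)}{p(\lambda_1)-\lambda(p)}$, as claimed. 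The hypothesis $p(\lambda_1)>\lambda(p)$ guarantees the denominator is positive and the monotonicity argument is valid.

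The main obstacle I anticipate is the bookkeeping around $b_{22}$ and making sure the trace identity is applied to the correct normalization: the quotient matrix $\B=\S^T p(\A)\S$ with $\S$ column-normalized has entries that are averages, not sums, so one has to be careful that $\tr(\S^T p(\A)\S)$ is \emph{not} $\tr(p(\A))$ but rather a weighted combination of block-diagonal averages — the identity I want is $\tfrac{s}{n}b_{11}+\tfrac{n-s}{n}b_{22} = \tfrac1n\tr(p(\A))$, obtained by taking the trace of $p(\A)$ block by block. A secondary subtlety is confirming that $\mu_1 = p(\lambda_1)$ is indeed the top eigenvalue of $\B$ and not $\mu_2$: this follows because $\B$ is (similar to) a matrix fixing a positive vector with eigenvalue $p(\lambda_1)$, and by interlacing $\mu_1\le p(\lambda_1)$ while the Perron-type eigenvector forces $\mu_1\ge p(\lambda_1)$ — but one should check $p(\lambda_1)$ is the \emph{largest} eigenvalue of $p(\A)$, which need not hold for arbitrary $p$; fortunately the statement only needs $p(\lambda_1)>\lambda(p)$ and the argument goes through by noting $p(\lambda_1)$ is an eigenvalue of $\B$ with a positive eigenvector, hence equals the spectral radius of the relevant block structure, which suffices to identify it as $\mu_1$ after checking signs. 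Once these normalization points are settled, the remaining steps are routine algebra.
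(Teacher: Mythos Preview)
Your overall strategy --- partition $V$ into $U$ and $V\setminus U$, form the $2\times 2$ quotient matrix $\B$ of $p(\A)$, identify $p(\lambda_1)$ as an eigenvalue of $\B$, bound the other eigenvalue $\mu_2\ge\lambda(p)$ by interlacing, and use $b_{11}\le W(p)$ --- is exactly what the paper does. The gap is in how you propose to pin down $b_{22}$. Your identity
\[
\tfrac{s}{n}\,b_{11}+\tfrac{n-s}{n}\,b_{22}=\tfrac{1}{n}\tr p(\A)
\]
is false: while $s\,b_{11}$ does equal the trace of the $U\times U$ block (that block being diagonal), $(n-s)\,b_{22}$ is the sum of \emph{all} entries of the $(V\setminus U)\times(V\setminus U)$ block, not just its diagonal --- and that block is not diagonal. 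So ``taking the trace of $p(\A)$ block by block'' does not recover the left-hand side, and your two-equation system in $b_{11},b_{22}$ does not close up to give $s$ in terms of $W(p)$, $p(\lambda_1)$, $\lambda(p)$ alone (an extraneous $\tr p(\A)$ would survive even if the identity held).

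The missing ingredient is the symmetry of $p(\A)$: the two off-diagonal blocks have the same total entry sum, so $s\,b_{12}=(n-s)\,b_{21}$. Combined with the row-sum constraints $b_{11}+b_{12}=b_{21}+b_{22}=p(\lambda_1)$ you already noted, this gives $b_{21}=\tfrac{s}{n-s}\bigl(p(\lambda_1)-b_{11}\bigr)$ and hence the explicit quotient matrix the paper writes down. One then gets the exact relation
\[
\mu_2=\tr\B-p(\lambda_1)=b_{11}\,\tfrac{n}{n-s}-\tfrac{s}{n-s}\,p(\lambda_1),\qquad\text{equivalently}\qquad s=n\,\tfrac{b_{11}-\mu_2}{p(\lambda_1)-\mu_2},
\]
and now $b_{11}\le W(p)$ and $\mu_2\ge\lambda(p)$ finish the proof directly --- no appeal to $\tr p(\A)$ is needed. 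This also sidesteps your monotonicity worry: the paper's route combines $\lambda(p)\le\mu_2$ with the upper bound $\mu_2\le W(p)-\tfrac{s(p(\lambda_1)-W(p))}{n-s}$ (obtained by replacing $b_{11}$ by $W(p)$, which is legitimate since $\mu_2$ is increasing in $b_{11}$) and solves for $s$, rather than first bounding $s$ by a function of $\mu_2$ and then moving $\mu_2$.
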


\begin{proof}
Let $\A$ be the adjacency matrix of $G$. Let $U$ be a $k$-independent set of $G$ with $r=|U|=\alpha_k(G)$ vertices. Again, assume the first columns (and rows) of $\A$ correspond to the vertices in $U$. Consider the partition of said columns according to $U$ and its complement. Let $\S$ be the normalized characteristic matrix of this partition. The quotient matrix of $p(\A)$ with regards to this partition is given by
\begin{equation}
\label{B_k=2}
\S^T p(\A) \S = \B_k=
\left(
\begin{array}{cc}
\frac{1}{r}\sum_{u\in U}(p(\A))_{uu} & p(\lambda_1)-\frac{1}{r}\sum_{u\in U}(p(\A))_{uu}\\
\frac{r p(\lambda_1)-\sum_{u\in U}(p(\A))_{uu}}{n-r}  & p(\lambda_1)-\frac{r p(\lambda_1)-\sum_{u\in U}(p(\A))_{uu}}{n-r}
\end{array}
\right),
\end{equation}
with eigenvalues $\mu_1=p(\lambda_1)$ and
$$
\mu_2=\tr \B_k-p(\lambda_1)=\frac{1}{r}\sum_{u\in U}(p(\A))_{uu}-\frac{r p(\lambda_1)-\sum_{u\in U}(p(\A))_{uu}}{n-r}.
$$
Then, by interlacing, we have
\begin{equation}
\label{interlacing:theo2}
\lambda(p)\le \mu_2\le W(p)-\frac{r p(\lambda_1)-rW(p)}{n-r},
\end{equation}
whence, solving for $r$ and taking into account that $p(\lambda_1)-\lambda(p)>0$, the result follows.
\end{proof}

Let us now consider some particular cases of  Theorem \ref{theo2}.

\subsubsection*{The case $k=1$.}
As mentioned above, $\alpha_1$ coincides with the standard independence number. In this case we can take $p$ as any linear polynomial satisfying $p(\lambda_1)>\lambda(p)$, say $p(x)=x$. Then, we have $W(p)=0$, $p(\lambda_1)=\lambda_1$, $\lambda(p)=p(\lambda_n)=\lambda_n$, and \eqref{eq:thm2} gives
\begin{equation}
\label{eq-k=1}
\alpha_1=\alpha\le n\frac{-\lambda_n}{\lambda_1-\lambda_n},
\end{equation}
which is Hoffman's bound in Theorem \ref{thm:hoffman}.

\subsubsection*{The case $k=2$.}
By making the right choice of a polynomial of degree two, we get the following result.

\begin{corollary}
\label{k=2}
Let $G$ be a $\delta$-regular graph with $n$ vertices, adjacency matrix $\A$,  and distinct eigenvalues $\theta_0(=\delta)>\theta_1> \cdots > \theta_d$ with $d\ge 2$. Let $\theta_i$ be the largest eigenvalue such that $\theta_i\le -1$. Then, the $2$-independence number satisfies
\begin{equation}
\label{eq-k=2}
\alpha_2\le n\frac{\theta_0+\theta_i\theta_{i-1}}{(\theta_0-\theta_i)
(\theta_0-\theta_{i-1})}.
\end{equation}
If the bound is attained, the matrix $\A^2-(\theta_i+\theta_{i-1})\A$ has a regular partition  $($with a set of $\alpha_2$ $2$-independent vertices and its complement$)$ with quotient matrix
\begin{equation}
\label{quotient-k=2}
\B=
\left(
\begin{array}{cc}
\delta & \delta^2-(\theta_i+\theta_{i-1}+1)k\\
 \delta+\theta_i\theta_{i-1} & \delta^2-(\theta_i+\theta_{i-1}+1)k-\theta_i\theta_{i-1}
\end{array}
\right).
\end{equation}
Moreover, this is the best possible bound that can be obtained by choosing a polynomial and applying Theorem \ref{theo2}.
\end{corollary}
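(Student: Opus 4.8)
The plan is to specialize Theorem~\ref{theo2} with a carefully chosen quadratic polynomial $p$ and then argue that this choice is optimal. First I would take
$$
p(x)=(x-\theta_i)(x-\theta_{i-1})=x^2-(\theta_i+\theta_{i-1})x+\theta_i\theta_{i-1},
$$
where $\theta_i$ is the largest eigenvalue that is $\le -1$ (so $\theta_{i-1}\ge 0$, indeed $\theta_{i-1}>-1$, ensuring the two roots straddle the remaining spectrum in the way we need). Then I would compute the four parameters appearing in Theorem~\ref{theo2}. Since $p(x)=x^2-(\theta_i+\theta_{i-1})x+\theta_i\theta_{i-1}$ and $G$ is $\delta$-regular, $(p(\A))_{uu}=(\A^2)_{uu}-(\theta_i+\theta_{i-1})(\A)_{uu}+\theta_i\theta_{i-1}=\delta-(\theta_i+\theta_{i-1})\cdot 0+\theta_i\theta_{i-1}=\delta+\theta_i\theta_{i-1}$ is the \emph{same} for every vertex $u$ (using that $\A$ has zero diagonal and $\A^2$ has constant diagonal $\delta$ by regularity). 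Hence $W(p)=w(p)=\delta+\theta_i\theta_{i-1}$. Also $p(\lambda_1)=p(\theta_0)=(\theta_0-\theta_i)(\theta_0-\theta_{i-1})>0$. The key point is to evaluate $\lambda(p)=\min_{i\in[2,n]}p(\lambda_i)$: over the distinct eigenvalues $\theta_1,\dots,\theta_d$, the parabola $p$ is a upward-opening quadratic with roots $\theta_i,\theta_{i-1}$, so its minimum over $\{\theta_1,\dots,\theta_d\}$ is attained at whichever of these points is closest to the vertex $\frac{\theta_i+\theta_{i-1}}{2}$; by the choice of $i$ this is one of $\theta_i$ or $\theta_{i-1}$ themselves, where $p$ vanishes. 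Thus $\lambda(p)=0$. Plugging $W(p)=\delta+\theta_i\theta_{i-1}$, $\lambda(p)=0$, $p(\theta_0)=(\theta_0-\theta_i)(\theta_0-\theta_{i-1})$ into \eqref{eq:thm2} yields exactly \eqref{eq-k=2}.

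For the equality case I would run Theorem~\ref{theo2}'s proof backwards. If the bound is attained, then the interlacing inequality \eqref{interlacing:theo2} is tight, which forces $\mu_2=\lambda(p)=0$ and moreover forces tight interlacing, so by Theorem~\ref{theo-interlacing}$(iii)$ the partition $\{U,V\setminus U\}$ is regular for the matrix $p(\A)=\A^2-(\theta_i+\theta_{i-1})\A+\theta_i\theta_{i-1}\I$, equivalently (since adding a scalar multiple of $\I$ does not affect regularity of a partition) for $\M:=\A^2-(\theta_i+\theta_{i-1})\A$. I would then compute the quotient matrix of $\M$ with respect to $\{U,V\setminus U\}$: the $(1,1)$-entry is the average row sum of $\M$ restricted to $U\times U$, and since $U$ is $2$-independent the blocks of $\A$ and $\A^2$ inside $U\times U$ are both zero except the diagonal of $\A^2$ which is $\delta$, so that entry is $\delta$. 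The row sum of $\M$ over a full row equals $\delta^2-(\theta_i+\theta_{i-1})\delta$, so the $(1,2)$-entry is $\delta^2-(\theta_i+\theta_{i-1})\delta-\delta=\delta^2-(\theta_i+\theta_{i-1}+1)\delta$, which matches \eqref{quotient-k=2} with $k=\delta$ (the paper writes $k$ for the degree here). The $(2,1)$ and $(2,2)$ entries follow by forcing the second row sum to be $\delta^2-(\theta_i+\theta_{i-1})\delta$ as well, together with $\mu_2=0$ giving the trace; a short check reproduces the stated matrix.

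Finally, for optimality among all degree-$\le 2$ polynomials I would argue as follows. Write a general $p\in\Re_2[x]$; scaling and adding constants, we may normalize. The right-hand side of \eqref{eq:thm2} is $n\frac{W(p)-\lambda(p)}{p(\theta_0)-\lambda(p)}$, and for a quadratic with positive leading coefficient and zero diagonal contribution, $W(p)-w(p)$ involves only the $x^2$-coefficient times $\mathrm{var}$ of the diagonal of $\A^2$, which is $0$ by regularity, so $W(p)=w(p)$ is a scalar $c$. One checks the bound is a monotone function of the ratio $\frac{c-\lambda(p)}{p(\theta_0)-\lambda(p)}$, and optimizing this over the two free parameters (the vertex location and the leading coefficient, after normalization) reduces to a one-dimensional problem: the optimal $p$ must vanish at $\lambda(p)$ (otherwise one can decrease $\lambda(p)$ at no cost) and must have its two roots among the eigenvalues so as to simultaneously keep $p(\theta_0)$ large and $c-\lambda(p)$ small; this pins the roots to be a consecutive pair $\theta_{i-1},\theta_i$ straddling $-1$, and a direct comparison of the resulting finitely many candidate ratios shows the pair with $\theta_i\le -1<\theta_{i-1}$ wins. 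I expect this last optimality argument to be the main obstacle: it requires care to set up the right normalization and to verify the monotonicity/comparison cleanly rather than hand-wavingly, whereas the computation of the parameters and the equality analysis are essentially bookkeeping once the polynomial is in hand.
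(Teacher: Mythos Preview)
Your derivation of \eqref{eq-k=2} and the equality case are correct and essentially coincide with the paper's: the paper uses $p(x)=x^2+bx$ with $b=-(\theta_i+\theta_{i-1})$, which differs from your $(x-\theta_i)(x-\theta_{i-1})$ only by the additive constant $\theta_i\theta_{i-1}$, and such shifts leave the ratio in Theorem~\ref{theo2} unchanged. (One small slip: you only know $\theta_{i-1}>-1$, not $\theta_{i-1}\ge 0$; this does not affect anything.)

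The optimality argument is where your proposal has a genuine gap, as you anticipate. Your claim that ``the optimal $p$ must vanish at $\lambda(p)$'' is just a normalization (constants are free), not a constraint; and the assertion that the roots must be a consecutive pair straddling $-1$ is exactly what needs proof. The paper fills this in by a clean one-variable calculation: normalize to $p(x)=x^2+bx$, fix the eigenvalue $\theta_j$ at which $\lambda(p)$ is achieved, and write $b=-\theta_j+\tau$ with $\tau\in[-\theta_{j-1},-\theta_{j+1}]$. Then the bound becomes
\[
\Phi(\tau)=n\,\frac{\theta_0-\theta_j\tau}{(\theta_0-\theta_j)(\theta_0+\tau)},
\qquad
\Phi'(\tau)=n\,\frac{-\theta_0(1+\theta_j)}{(\theta_0-\theta_j)(\theta_0+\tau)^2},
\]
so $\Phi$ is increasing, constant, or decreasing according as $\theta_j<-1$, $\theta_j=-1$, or $\theta_j>-1$. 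Pushing $\tau$ to the appropriate endpoint and iterating drives the optimal $b$ to $-(\theta_i+\theta_{i-1})$ with $\theta_i\le -1<\theta_{i-1}$. You also omit the case of negative leading coefficient: the paper takes $p(x)=-x^2+bx$, observes that $p(\theta_0)>\lambda(p)$ forces $b>\theta_0+\theta_d$, and shows the resulting bound is decreasing in $b$ with limit $n\frac{-\theta_d}{\theta_0-\theta_d}$, i.e.\ merely Hoffman's bound for $\alpha_1$, hence never better than the $a>0$ case.
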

\begin{proof}
Note that only the last assertion is non-trivial, in view of Theorem \ref{theo2}. We now show why it holds. Let $p(x)=ax^2+bx+c$ and suppose first that $a>0$. Then, from the expression of the bound in \eqref{eq:thm2}, there is no loss of generality if we take $a=1$ and $c=0$.
Then, the minimum of the polynomial $p(x)=x^2+bx$ is attained at $x=-b/2$ and, hence,
given $b$, the minimum $\lambda(p)$ must be equal to $p(\theta_i)$ where $\theta_i$ is the eigenvalue closest to $-b/2$. Thus, from $(\theta_i+\theta_{i+1})/2\le -b/2\le (\theta_i+\theta_{i-1})/2$ we can write that $b=-\theta_i+\tau$ for $\tau\in[-\theta_{i-1},-\theta_{i+1}]$. Then, with $W(p)=\theta_0$, $\lambda(p)=p(\theta_i)=\tau\theta_i$, and $p(\theta_0)=\theta_0^2+(-\theta_i+\tau)\theta_0\ (>\lambda(p))$, the bound in \eqref{eq:thm2}, as a function of $\tau$, is
$$
\Phi(\tau)=n\frac{\theta_0-\theta_i\tau}{(\theta_0-\theta_i)(\theta_0+
\tau)},
$$
with derivative $\Phi'(\tau)=n\frac{-\theta_0(1+\theta_i)}{(\theta_0-\theta_i)(\theta_0+
\tau)^2}$. Consequently, the resulting bound $\Phi(\tau)$ is an increasing, constant, or decreasing function depending on $\theta_i<-1$, $\theta_i=-1$, or $\theta_i>-1$, respectively. Since we are interested in the minimum value of $\Phi$, we reason as follows:
\begin{itemize}
\item
If $\theta_i<-1$, we must take the value of $\tau$ as small as possible, that is $\tau=-\theta_{i-1}$, which gives
$\alpha_2\le \Phi(-\theta_{i-1})=n\frac{\theta_0+\theta_i\theta_{i-1}}
{(\theta_0-\theta_i)(\theta_i-\theta_{i-1})}$. Moreover, iterating the reasoning, we eventually take for $\theta_i$ the largest eigenvalue smaller than $-1$, as claimed.
\item
If $\theta_i=-1$, we have that $\theta_{i+1}>-1$ and, with $\theta_i$ taking the role of $\theta_{i+1}$, we are in the next case.
\item
If $\theta_i>-1$, we must take the value of $\tau$ as large as possible, that is $\tau=-\theta_{i+1}$, which gives
$\alpha_2\le \Phi(-\theta_{i+1})=n\frac{\theta_0+\theta_i\theta_{i+1}}
{(\theta_0-\theta_i)(\theta_i-\theta_{i+1})}$. Again, iterating the procedure, we eventually take for $\theta_i$ the smallest eigenvalue greater than $-1$, as claimed. Moreover, $\theta_{i+1}$ is the largest eigenvalue that is as most $-1$, in agreement with our claim.
\end{itemize}

To show that our choice of the polynomial $p$ is best possible, we assume now that $a<0$ and, then, we reason with $p(x)=-x^2+bx$. First, to satisfy the condition $p(\theta_0)>\lambda(p)$, we must have $b>\theta_0+\theta_d$. Then, $\lambda(p)=p(\theta_d)=-\theta_d^2-b\theta_d$ and the bound in \eqref{eq:thm2} as a function of $b$, is
$$
\Phi(b)
=n\frac{-\theta_0+\theta_d^2-b\theta_d}{-\theta_0^2+\theta_d^2+b(\theta_0-\theta_d)},
$$
which is decreasing for $b>\theta_0+\theta_d$. Then, we should take $\lim_{b\rightarrow\infty}\Phi(b)=n\frac{-\theta_d}{\theta_0-\theta_d}$. But this is again the Hoffman's bound in \eqref{eq-k=1} for $\alpha_1$, which is trivial for $\alpha_2$.

If equality in \eqref{eq-k=2} holds, from \eqref{interlacing:theo2} we conclude that $\mu_2=\lambda(p)$ and, since $\mu_1=p(\lambda_1)(=\Lambda(p))$, the interlacing is tight and the partition of $p(\A)$ is regular. Finally,  its  quotient matrix $\B$ in \eqref{quotient-k=2} is obtained from  \eqref{B_k=2} by using the right polynomial $p(x)$ and the bound of $\alpha_2$ in \eqref{eq-k=2}.
\end{proof}

Before giving some examples, we notice that the above choice of $\theta_i(\le -1)$ always make sense because it is easy to prove (for example, using interlacing) that the smallest eigenvalue of a graph always satisfies this condition.

In Table \ref{table1} we show the results of testing all named graphs from SAGE. The performance of our purely spectral bound from Corollary \ref{k=2} (column denoted ``Corollary \ref{k=2}") is compared to the best bound that appears in \cite{act16} (column denoted ``Bound \cite{act16}"), which, to our knowledge, is the best known bound for $\alpha_2$ that can be obtained via spectral methods only. Moreover, we compare the mentioned bounds to the values of the floor of the Lov\'asz theta number of the distance at most 2 graph (column denoted ``$\Theta_2$ \cite{L1979}"). The last column of the following table provides the actual value of $\alpha_2$. Regarding the last column, entries that say ``time" denote that the computation took longer than 60 seconds on a standard laptop. The parameter $\alpha_k$ is computationally hard to determine, and it is not clear how long it would take to calculate the table entries that timed out. Note that in almost all cases our bound from Corollary \ref{k=2} performs significantly better than the best known spectral bound.

\begin{table}
\scriptsize{
\begin{center}
\begin{tabular}{|lcccc|}
\hline
Name & Bound in \cite{act16} & $\Theta_2$ \cite{L1979} & Corollary \ref{k=2} & $\alpha_2$\\
\hline
Balaban 10-cage & $32$ & $17$ & $17$ & $17$ \\
Frucht graph & $6$ &  $3$ & $3$ & $3$ \\
Meredith graph & $20$  & $10$ & $14$ & $10$ \\
Moebius-Kantor graph & $8$  & $4$ & $4$ & $4$ \\
Bidiakis cube & $5$ &$2$ & $3$ & $2$ \\
Gosset graph & $2$ &  $2$ & $2$ & $2$ \\
Balaban 11-cage & $41$ & $26$ & $27$ & time \\
Gray graph & $33$ &  $11$ & $11$ & $11$ \\
Nauru graph & $10$ & $6$ & $6$ & $6$ \\
Blanusa first snark graph & $8$ &  $4$ & $4$ & $4$ \\
Pappus graph & $9$  & $3$ & $3$ &$3$ \\
Blanusa second snark graph & $8$ & $4$ & $4$ &$4$ \\
Brinkmann graph & $6$ & $3$ & $3$ &$3$ \\
Harborth graph & $24$ & $10$ & $10$ &$10$ \\
Perkel graph & $12$ &  $5$ & $5$ & $5$ \\
Harries graph & $32$ & $17$ & $17$ &$17$ \\
Bucky ball & $23$ &  $12$ & $14$ &$12$ \\
Harries-Wong graph & $32$ & $17$ & $17$ &$17$ \\
Robertson graph & $4$ & $3$ & $3$ &$3$ \\
Heawood graph & $2$ &  $2$ & $2$ &$2$ \\
Cell 600 & $18$  & $8$ & $8$ & $8$ \\
Cell 120 & $302$ & $120$ & $120$ & $120$ \\
Hoffman graph & $6$ & $2$ & $2$ &$2$ \\
Sylvester graph & $8$ & $6$ & $6$ &$6$ \\
Coxeter graph & $13$ & $7$ & $7$ &$7$ \\
Holt graph & $10$ & $3$ & $4$ & $3$ \\
Szekeres snark graph & $25$ & $10$ & $12$ & $9$ \\
Desargues graph & $10$ & $5$ & $5$ & $4$ \\
Horton graph & $50$ &  $24$ & $24$ & $24$ \\
Dejter graph & $44$ &  $16$ & $16$ & $16$ \\
Tietze graph & $5$ & $3$ & $3$ & $3$ \\
Double star snark & $12$ & $7$ & $7$ & $6$ \\
Truncated icosidodecahedron & $60$ & $28$ & $30$ & $26$ \\
Durer graph & $5$ & $2$ & $2$ & $2$ \\
Klein 3-regular Graph & $22$ & $13$ & $13$ & $12$ \\
Truncated tetrahedron & $5$ & $3$ & $3$ & $3$ \\
Dyck graph & $14$ & $8$ & $8$ & $8$ \\
Klein 7-regular graph & $3$ & $3$ & $3$ & $3$ \\
Tutte 12-cage & $44$ & $28$ & $28$ & time \\
Ellingham-Horton 54-graph & $32$ & $12$ & $13$ & $11$ \\
Tutte-Coxeter graph & $10$ & $6$ & $6$ & $6$ \\
Ellingham-Horton 78-graph & $38$ & $19$ & $19$ & $18$ \\
Ljubljana graph & $44$ & $27$ & $27$ & time \\
Tutte graph & $21$ & $10$ & $11$ & $10$ \\
F26A graph & $12$ & $6$ & $6$ & $6$ \\
Watkins snark graph & $25$ & $9$ & $12$ & $9$ \\
Flower snark & $7$ & $5$ & $5$ & $5$ \\
Markstroem graph & $11$ & $6$ & $6$ & $6$ \\
Wells graph & $6$ & $3$ & $3$ & $2$ \\
Folkman graph & $10$ & $3$ & $3$ & $3$ \\
Foster graph & $44$ & $22$ & $22$ & $21$ \\
McGee graph & $10$ & $5$ & $6$ & $5$ \\
Franklin graph & $6$ & $2$ & $3$ & $2$ \\
Hexahedron & $2$ & $2$ & $2$ & $2$ \\
Dodecahedron & $9$ & $4$ & $4$ & $4$ \\
Icosahedron & $2$ & $2$ & $2$ & $2$ \\
\hline
\end{tabular}
\end{center}}
\caption{Comparison between different bounds for the $2$-independence number.}
\label{table1}
\end{table}
\normalsize

Apart from the examples in the table, we describe next two infinite families of (distance-regular) graphs where the bound of Corollary \ref{k=2} is tight.

First, suppose that $G$ is a connected {\em strongly regular} graph with parameters $(n,k,a,c)$ (here we follow the notation of Godsil \cite{g93}). That is, $G$ is a $k$-regular graph with $n$ vertices, such that every pair of adjacent vertices have $a$ common neighbours, and every pair of non-adjacent vertices have $c>0$ common neighbours. Then, $G$ has distinct eigenvalues
$$
\textstyle
\theta_0=k,\quad \theta_1=\frac{1}{2}[a-c+\sqrt{(a-c)^2+4(a-c)}],\quad
\theta_2=\frac{1}{2}[a-c-\sqrt{(a-c)^2+4(a-c)}],
$$
(for instance, see again \cite{g93}).
Moreover, as $n=1+k+\frac{1}{c}[k(k-a+1)]$, Corollary  \ref{k=2} gives
$\alpha_2=1$, as it should be since $G$ has diameter $2$.

Now, let us take $G$ an antipodal and bipartite distance-regular graph, with degree $k$ and diameter $3$. As shown in \cite{bcn89}, these graph have $n=2(k+1)$ vertices, intersection array $\{k,k-1,1;1,k-1,k\}$, and distinct eigenvalues
\begin{equation}
\label{eigenvals-LK(2,k+1)}
\theta_0=k,\ \theta_1=1,\ \theta_2=-1, \ \theta_3=-k.
\end{equation}
They are also uniquely determined for each $k$. They are the complement of the line graph of the complete bipartite graph $K_{2,k+1}$, denoted by $L(K_{2,k+1})$. Alternatively, $G$ can be constructed from $K_{k+1,k+1}$ minus a perfect matching. In particular for $k=2,3$ we obtain the hexagon and the $3$-cube, respectively. In Figure \ref{fig1} we shown the case of $k=5$.
With the eigenvalues in \eqref{eigenvals-LK(2,k+1)}, Corollary \ref{k=2} then gives $\alpha_2\le 2$, which is tight since the graph is $2$-antipodal, as shown in the example of the figure. Moreover, since $\theta_1+\theta_2=0$, the polynomial in Corollary \ref{k=2} is just $p(x)=x^2$, and hence the matrix $\A^2$ has
a regular partition with the following quotient matrix given by \eqref{quotient-k=2}:
\begin{equation}
\label{quotient-k=2-example}
\B=
\left(
\begin{array}{cc}
k & k(k-1)\\
k-1 & k(k-1)+1
\end{array}
\right).
\end{equation}

\begin{figure}[t]
\begin{center}
\includegraphics[width=8cm]{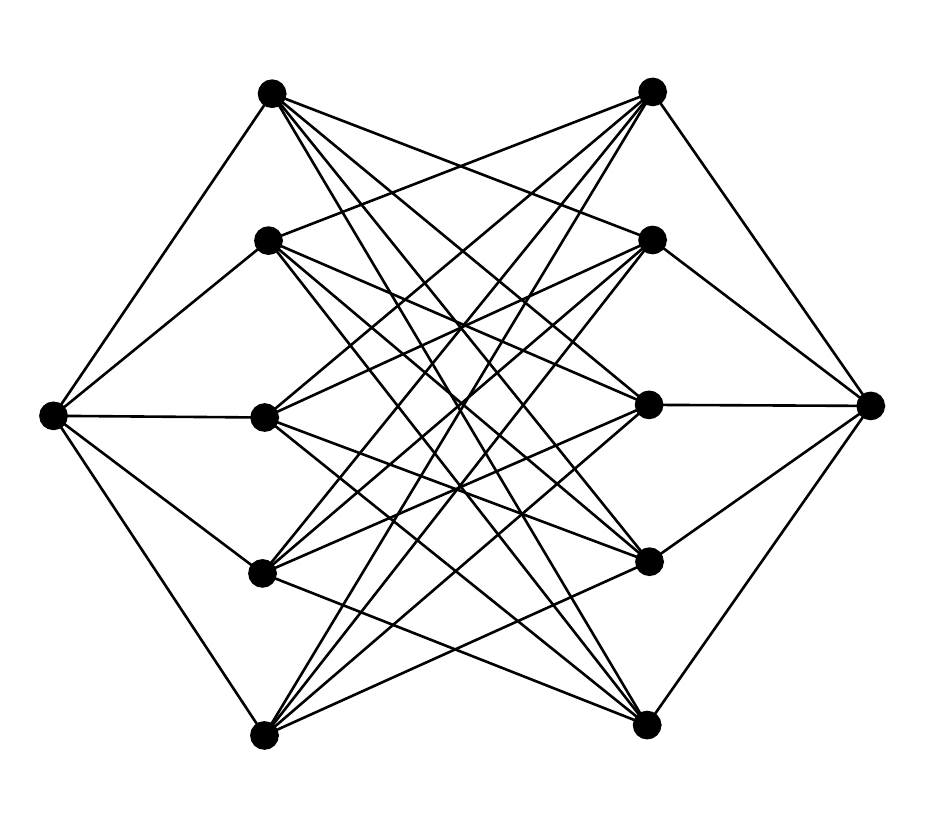}
\caption{An antipodal and bipartite distance-regular graph with degree $5$.}
\label{fig1}
\end{center}
\end{figure}

\subsubsection*{The case of general $k$.}
Another consequence of Theorem \ref{theo2} is the following Corollary \ref{coro-gen-k}, which is closely related to Theorem \ref{thm:abiad}. This is due to the fact that both results make use of the same polynomial\\  ${p(x)=x+x^2+\cdots+x^k}$, although the bounds given in Corollary \ref{coro-gen-k} constitute a significant improvement.

Regarding the next result, first note that for any $p(x)$, if $W(p) > p(\lambda_1)$, then the bound in Theorem \ref{theo2} is trivial. If $p(\lambda_1) \geq W(p)$, then any positive constant can be added to both the numerator and denominator of the quotient in Theorem \ref{theo2} without changing the sign of the inequality. In particular, if $p(\lambda_i) \geq 0$ for all $i$, we can choose to ignore the term $\lambda(p)$ in the bound. On the other hand, given $p(x)$, one can always define the polynomial $q(x) = p(x) - \lambda(p)$, which satisfies $q(\lambda_i) \geq 0$ for all $i$. It is therefore not hard to see that the following corollary is equivalent to Theorem \ref{theo2}, in the sense that the minimization of the ratio over all polynomials satisfying the hypotheses will yield to the same bound.

\begin{corollary}\label{cor:pos}
Let $G$ be a regular graph with $n$ vertices and eigenvalues $\lambda_1\ge\cdots\ge \lambda_n$.
Let  $p\in \Re_k[x]$ with corresponding parameter $W(p)$ and so that $p(\lambda_i) \geq 0$ for all $i$. Then
\begin{equation}
\alpha_k\le n\frac{W(p)}{p(\lambda_1)}.
\end{equation}
\end{corollary}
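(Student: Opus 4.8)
The statement follows almost immediately from Theorem~\ref{theo2}, so the plan is to reduce to it. The key preliminary observation is that the hypothesis $p(\lambda_i)\ge 0$ for all $i$ says precisely that the matrix $p(\A)$ is positive semidefinite, since the eigenvalues of $p(\A)$ are the numbers $p(\lambda_i)$. From this I would extract two facts: every diagonal entry of $p(\A)$ is nonnegative, so that $W(p)\ge w(p)\ge 0$; and $\lambda(p)=\min_{i\in[2,n]}p(\lambda_i)\ge 0$. I would also record the elementary fact that the largest diagonal entry of $p(\A)$ is at least the average of its eigenvalues, i.e. $W(p)\ge \frac{1}{n}\tr p(\A)=\frac{1}{n}\sum_{i}p(\lambda_i)$.

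Next I would split into two cases according to whether $W(p)\ge p(\lambda_1)$. If $W(p)\ge p(\lambda_1)$ (which forces $p(\lambda_1)>0$; and if $p(\lambda_1)=0$ the stated ratio is vacuous), then $n\,W(p)/p(\lambda_1)\ge n\ge\alpha_k$ and there is nothing to prove. So assume $W(p)<p(\lambda_1)$; in particular $p(\lambda_1)>W(p)\ge 0$. I would first confirm that Theorem~\ref{theo2} is applicable, i.e. that $p(\lambda_1)>\lambda(p)$: if instead $\lambda(p)\ge p(\lambda_1)$, then $p(\lambda_i)\ge\lambda(p)\ge p(\lambda_1)$ for all $i\ge 2$, so $p(\lambda_1)=\min_i p(\lambda_i)$ and hence $W(p)\ge\frac{1}{n}\sum_i p(\lambda_i)\ge p(\lambda_1)$, contradicting $W(p)<p(\lambda_1)$.

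With $p(\lambda_1)>\lambda(p)$ in hand, Theorem~\ref{theo2} gives $\alpha_k\le n\frac{W(p)-\lambda(p)}{p(\lambda_1)-\lambda(p)}$, and it remains to observe that $\frac{W(p)-\lambda(p)}{p(\lambda_1)-\lambda(p)}\le\frac{W(p)}{p(\lambda_1)}$. Both denominators are positive, so after cross-multiplying this is equivalent to $\lambda(p)\,(p(\lambda_1)-W(p))\ge 0$, which holds because $\lambda(p)\ge 0$ and $p(\lambda_1)>W(p)$. This completes the argument.

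I do not expect a real obstacle: the only points needing a little care are the positive-semidefiniteness observation (which drives everything, guaranteeing $\lambda(p)\ge 0$ and $W(p)\ge 0$) and the small amount of bookkeeping to verify the hypothesis $p(\lambda_1)>\lambda(p)$ of Theorem~\ref{theo2} in the nontrivial case. As an alternative to invoking Theorem~\ref{theo2} as a black box, one could simply rerun its interlacing proof with $\lambda(p)$ replaced by $0$ throughout (legitimate since $p(\A)$ is positive semidefinite); this is arguably cleaner but is essentially the same computation.
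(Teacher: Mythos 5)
Your proposal is correct and follows essentially the same route as the paper, which also deduces the corollary from Theorem~\ref{theo2} by observing that when $p(\lambda_1)\ge W(p)$ one may add the nonnegative constant $\lambda(p)$ to both numerator and denominator of the quotient without weakening the bound (and that the case $W(p)>p(\lambda_1)$ is trivial). Your write-up is somewhat more careful than the paper's, in particular in verifying the hypothesis $p(\lambda_1)>\lambda(p)$ via the trace inequality $W(p)\ge\frac{1}{n}\sum_i p(\lambda_i)$, but the underlying argument is the same.
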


If $\nu  = \max\{|\lambda_2| , |\lambda_n|\}$, and upon choosing $p(x) = \sum_{\ell = 1}^k x^\ell + \sum_{\ell = 1}^k \nu^\ell$, it is easy to see that $p(\lambda_i) \geq 0$ for all $i$, and that the previous corollary gives precisely Theorem \ref{thm:abiad}. We can do better using the same polynomial, noting that $\lambda(p)$ can be computed explicitly for the case when $k$ is odd, and a reasonable lower bound for it can be found for when $k$ is even.

\begin{corollary}
\label{coro-gen-k}
Let $G$ be a $\delta$-regular graph with $n$ vertices and distinct eigenvalues $\theta_0(=\delta)>\theta_1> \cdots > \theta_d$. Let $W_k=W(p)=\max_{u\in V}\{\sum_{i=1}^k(\A^k)_{uu}\}$.
Then, the $k$-independence number of $G$ satisfies the following:
\begin{itemize}
\item[$(i)$]
If $k$ is odd, then
\begin{equation}
\label{eq-k}
\alpha_k\le n\frac{W_k-\sum_{j=0}^k \theta_d^j}{\sum_{j=0}^k \delta^j-\sum_{j=0}^k \theta_d^j}.
\end{equation}
\item[$(ii)$]
If $k$ is even, then
\begin{equation}
\alpha_k\le n\frac{W_k+1/2}{\sum_{j=0}^k \delta^j+1/2}.
\end{equation}
\end{itemize}
\end{corollary}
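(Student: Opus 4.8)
The plan is to apply Theorem~\ref{theo2} to the single polynomial $p(x)=1+x+x^2+\cdots+x^k\in\Re_k[x]$; this is, up to the additive constant $1$ (which leaves the ratio in \eqref{eq:thm2} unchanged), the polynomial $\sum_{\ell=1}^k x^\ell$ used in Theorem~\ref{thm:abiad}. Since $G$ is $\delta$-regular, $\lambda_1=\theta_0=\delta$, so $p(\lambda_1)=\sum_{j=0}^k\delta^j$ and $W(p)=\max_{u\in V}(p(\A))_{uu}=\max_{u\in V}\sum_{j=0}^k(\A^j)_{uu}=W_k$. Everything then reduces to controlling $\lambda(p)=\min_{i\in[2,n]}p(\lambda_i)$: I would compute it exactly when $k$ is odd, and bound it below when $k$ is even.

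\textbf{Odd $k$.} Here $p$ factors as $p(x)=(1+x)Q(x)$ with $Q(x)=1+x^2+\cdots+x^{k-1}=\sum_{i=0}^{(k-1)/2}x^{2i}\ge 1$. As the authors already observe, $\theta_d\le -1$ (e.g.\ by interlacing the $2\times 2$ submatrix of an edge), so $1+\theta_d\le 0$ and $p(\theta_d)=(1+\theta_d)Q(\theta_d)\le 0$. On $(-\infty,-1]$ one checks $p'(x)=Q(x)+(1+x)Q'(x)\ge Q(x)\ge 1>0$ (there $1+x\le 0$ and $Q'(x)\le 0$), so $p$ increases on $(-\infty,-1]$; and on $[-1,\infty)$ one has $p(x)=(1+x)Q(x)\ge 0\ge p(\theta_d)$. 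Since each $\lambda_i$ with $i\ge 2$ lies in $[\theta_d,\theta_1]$, it follows that $\lambda(p)=p(\theta_d)=\sum_{j=0}^k\theta_d^j$. The hypothesis $p(\lambda_1)>\lambda(p)$ of Theorem~\ref{theo2} holds since $\sum_{j=0}^k\delta^j>0\ge\sum_{j=0}^k\theta_d^j$, and substituting into \eqref{eq:thm2} gives $(i)$.

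\textbf{Even $k$.} Now there is no factorization forcing the extremum of $p$ onto $\theta_d$, and the crux is the elementary inequality $p(x)>\tfrac12$ for \emph{every} real $x$. Writing $p(x)=1+\sum_{i=1}^{k/2}x^{2i-1}(1+x)$, each summand is $\ge 0$ when $x\ge 0$ or $x\le -1$, so $p(x)\ge 1$ there. For $-1<x<0$ I would sum the tail geometrically, $\sum_{i=1}^{k/2}x^{2i-1}(1+x)=\dfrac{x(1-x^{k})}{1-x}$, whence $p(x)=1+\dfrac{x(1-x^{k})}{1-x}$, and then estimate $\left|\dfrac{x(1-x^{k})}{1-x}\right|\le\dfrac{|x|}{1+|x|}<\dfrac12$ using $0<1-x^{k}<1$ and $|x|<1$; so $p(x)>\tfrac12$. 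Hence $\lambda(p)>\tfrac12>-\tfrac12$, i.e.\ $-\tfrac12$ is a legitimate lower bound for $\lambda(p)$. If $W_k\ge\sum_{j=0}^k\delta^j$ then $(ii)$ asserts $\alpha_k\le n\cdot(\text{something}\ge 1)$ and is trivial; otherwise $t\mapsto(W(p)-t)/(p(\lambda_1)-t)$ is decreasing on $(-\infty,p(\lambda_1))$, so replacing the true $\lambda(p)$ by $-\tfrac12$ only weakens the bound in \eqref{eq:thm2}. As $p(\lambda_1)>\lambda(p)$ again holds (indeed $\lambda(p)\le p(\theta_d)<p(\delta)$, since $\theta_d\in[-\delta,\delta)$), Theorem~\ref{theo2} gives $\alpha_k\le n\frac{W_k+1/2}{\sum_{j=0}^k\delta^j+1/2}$, which is $(ii)$.

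The step I expect to be the main obstacle is the even case: pinning down $\min_{\Re}p$ and showing it exceeds $\tfrac12$. The ranges $x\ge 0$ and $x\le -1$ fall out of the pairing trick, but the interval $-1<x<0$ really needs the closed form $p(x)=1+x(1-x^{k})/(1-x)$ together with the estimate $|x|/(1+|x|)<\tfrac12$. The remaining points---checking that substituting a lower bound for $\lambda(p)$ into Theorem~\ref{theo2} preserves validity (monotonicity of the ratio, plus the trivial sub-case), and verifying $p(\lambda_1)>\lambda(p)$---are routine.
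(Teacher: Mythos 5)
Your proof is correct and follows essentially the same route as the paper: apply Theorem~\ref{theo2} to $p(x)=\sum_{\ell=0}^{k}x^{\ell}$, compute $\lambda(p)=p(\theta_d)$ exactly when $k$ is odd, and replace $\lambda(p)$ by the lower bound $-1/2$ (equivalently, pass to $p+1/2$ and use Corollary~\ref{cor:pos}) when $k$ is even. The only difference is that you supply the details the paper leaves implicit — the monotonicity argument locating the minimum at $\theta_d$ for odd $k$, and the elementary estimate $|x(1-x^k)/(1-x)|<1/2$ on $(-1,0)$ giving the $-1/2$ bound for even $k$ — both of which are verified correctly.
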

\begin{proof}
For odd $k$, the polynomial $p(x)=x+x^2+\cdots+x^k$ is strictly increasing for any $x$, thus the (negative) value of $\lambda(p)$ is always  $\sum_{j=0}^k \theta_d^j$, and Theorem \ref{theo2} gives the desired bound in (i).

For even $k$, the polynomial $p(x)$ is negative precisely between $-1$ and $0$, and its minimum is bounded below by $-(1/2)$. In fact, it approaches $-(1/2)$ as $k$ grows. Therefore (ii) follows from Corollary \ref{cor:pos} applied to $p(x) + 1/2$.
\end{proof}

\subsubsection*{The case of walk-regularity.}
Assume now that $G$ is {\em walk-regular}, that is, for any fixed $k\ge 0$, the number $a_{uu}^{(k)}$ of closed walks of length $k$ rooted at a vertex $u$ does not depend on $u$. As a consequence, for any polynomial $p(x)$, $W(p) = \frac{1}{n} \tr p(\A)$.
\begin{corollary}
\label{coro-walk-reg}
Let $G=(V,E)$ be a walk-regular graph, with degree $\delta$, $n$ vertices, and spectrum $\spec G=\{\theta_0(=\delta),\theta_1^{m_1},\ldots,\theta_d^{m_d}\}$, where $\theta_0>\cdots>\theta_d$. Let $p=q_k$ be the sum polynomial $q_k=p_0+\cdots+p_k$, for $k>0$, where the $p_i$'s stand for the predistance polynomials of $G$.
Then the $k$-independence number of $G$ satisfies
$$
\alpha_k(G)\le n\frac{1-\lambda(q_k)}{q_k(\delta)-\lambda(q_k)}.
$$
\end{corollary}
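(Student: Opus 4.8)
The plan is to apply Theorem~\ref{theo2} with the specific choice $p = q_k$, and to reduce the two relevant parameters $W(q_k)$ and $q_k(\lambda_1)$ to the closed forms appearing in the statement by exploiting walk-regularity together with the orthogonality and normalization of the predistance polynomials. Note $q_k\in\Re_k[x]$ since $\dgr p_k = k$, so it is an admissible polynomial (we take $1\le k\le d$, reading $q_d$ as the Hoffman polynomial $H$ when $k=d$); also, since $G$ is walk-regular it is regular, say of degree $\delta$, and its top eigenvalue is $\lambda_1 = \theta_0 = \delta$.

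First I would compute $W(q_k)$. By walk-regularity, for any polynomial $p$ all diagonal entries of $p(\A)$ coincide, so $W(p) = \frac1n\tr p(\A) = \langle p, 1\rangle_G$. Taking $p = q_k = p_0 + p_1 + \cdots + p_k$ and using $1 = p_0$ together with orthogonality of the predistance polynomials, $\langle q_k, p_0\rangle_G = \langle p_0, p_0\rangle_G = \|p_0\|_G^2 = p_0(\theta_0) = 1$, hence $W(q_k) = 1$. Combined with $q_k(\lambda_1) = q_k(\theta_0) = q_k(\delta)$, the numerator and denominator of Theorem~\ref{theo2} become exactly $1 - \lambda(q_k)$ and $q_k(\delta) - \lambda(q_k)$, which gives the claimed inequality \emph{provided} the hypothesis $q_k(\lambda_1) > \lambda(q_k)$ of Theorem~\ref{theo2} is satisfied.

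The one step that needs real work — and the main obstacle — is verifying the strict inequality $q_k(\delta) > \lambda(q_k) = \min_{1\le i\le d} q_k(\theta_i)$. I would prove it through the norm identity $\|q_k\|_G^2 = \sum_{i=0}^k \|p_i\|_G^2 = \sum_{i=0}^k p_i(\theta_0) = q_k(\theta_0)$ (from orthogonality and the normalization $\|p_i\|_G^2 = p_i(\theta_0)$), compared with $\|q_k\|_G^2 = \frac1n\big(q_k(\theta_0)^2 + \sum_{i=1}^d m_i q_k(\theta_i)^2\big)$. Suppose, for contradiction, that $q_k(\theta_i)\ge q_k(\theta_0)$ for every $i\ge 1$; since $q_k(\theta_0)\ge q_0(\theta_0)=1>0$ this gives $q_k(\theta_i)^2\ge q_k(\theta_0)^2$, whence $n\,q_k(\theta_0) = q_k(\theta_0)^2 + \sum_{i=1}^d m_i q_k(\theta_i)^2 \ge q_k(\theta_0)^2 + (n-1)q_k(\theta_0)^2 = n\,q_k(\theta_0)^2$, forcing $q_k(\theta_0)\le 1$ and hence $q_k(\theta_0)=1$; but $1 = q_0(\theta_0) < q_1(\theta_0) < \cdots$, so this forces $k=0$, contradicting $k>0$. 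Therefore $q_k(\theta_i) < q_k(\theta_0)$ for some $i\ge 1$, i.e. $\lambda(q_k) < q_k(\delta)$. (Alternatively one may simply invoke the known comparison $q_k(\theta_0) > q_k(\theta_i)$ for the sum polynomials, cf.~\cite{cffg09}.) With the hypothesis of Theorem~\ref{theo2} in hand, plugging $W(q_k)=1$ and $q_k(\lambda_1)=q_k(\delta)$ into \eqref{eq:thm2} yields $\alpha_k(G)\le n\frac{1-\lambda(q_k)}{q_k(\delta)-\lambda(q_k)}$, as desired; in the boundary case $k=d$ one has $\lambda(q_d)=0$ and the bound degenerates to the trivial $\alpha_d\le 1$.
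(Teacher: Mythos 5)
Your proposal is correct and follows the paper's route: apply Theorem~\ref{theo2} with $p=q_k$, and use walk-regularity plus $p_0=1$ and orthogonality to get $W(q_k)=\frac1n\tr q_k(\A)=\langle q_k,1\rangle_G=\|p_0\|_G^2=1$, which is exactly the paper's computation. The only place you diverge is in verifying the hypothesis $q_k(\delta)>\lambda(q_k)$. The paper does this via the second scalar product: since $q_0=1$ and $q_k$ are orthogonal with respect to $\langle\cdot,\cdot\rangle_{[G]}$, one has $\sum_i m_i(\theta_0-\theta_i)q_k(\theta_i)=0$ with nonnegative weights, so some $q_k(\theta_i)$ with $i>0$ must be negative (or all vanish, the Hoffman-polynomial case $k=d$), giving the slightly stronger conclusion $\lambda(q_k)\le 0<q_k(\theta_0)$. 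You instead work entirely with $\langle\cdot,\cdot\rangle_G$, comparing the norm identity $\|q_k\|_G^2=q_k(\theta_0)$ against the spectral expansion of $\|q_k\|_G^2$ and deriving a contradiction with $q_k(\theta_0)>1$ for $k\ge 1$; this is a valid and self-contained alternative (and has the minor virtue of not relying on the product $\langle\cdot,\cdot\rangle_{[G]}$, whose summation range as printed in the preliminaries is slightly off), though it yields only $\lambda(q_k)<q_k(\theta_0)$ rather than $\lambda(q_k)\le 0$. Both suffice for Theorem~\ref{theo2}, so the difference is cosmetic; your boundary remarks about $k=d$ are also consistent with what the paper implicitly uses.
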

\begin{proof}
Notice that, since $G$ is walk-regular, $(\A^{\ell})_{uu}=\frac{1}{n}\tr \A^{\ell}$ for any $u\in V$ and $\ell=0,1,\ldots,k$. Thus,
$$
W(q_k)=\frac{1}{n}\tr q_k(\A)=\frac{1}{n}\sum_{i=0}^d m_i q_k(\theta_i)=\langle q_k,1\rangle_G=\|p_0\|^2=1,
$$
where we used that $p_0=1$.
Moreover, from the orthogonality of the polynomials $q_i$, and $q_0=1$,
$\langle 1,q_k\rangle_{[G]}=\frac{1}{n}\sum_{i=0}^{d-1}m_i(\theta_0-\theta_i)q_k(\theta_i)=0$. Then, since $q_k(\theta_0)>0$, it must be $q_k(\theta_i)<0$ for some $i>0$, and hence $q_k(\theta_0)>\lambda(q_k)$, as required. Then, the result follows from Theorem \ref{theo2}.
\end{proof}

\subsection*{An Example}

To compare the above bounds with those  obtained in \cite{Fiolkindep} and \cite{act16} (here in Theorems \ref{thm:fiol} and \ref{thm:abiad}, respectively), let us consider $G$ to be the Johnson graph $J(14,7)$ (see, for instance, \cite{bcn89,g93}). This is an antipodal (but not bipartite) distance-regular graph, with $n=3432$ vertices, diameter $D=7$, and spectrum
$$
\spec G=\{49^1, 35^{13}, 23^{77}, 13^{273}, 5^{637}, -1^{1001}, -5^{1001}, -7^{429}\}.
$$
In Table \ref{table2} we show the bounds obtained for $\alpha_k$, together with the values of $P_k(\theta_0)$, $W_k$, $\theta$,  $\lambda(p)$, $q_k(\delta)$, and  $\lambda(q_k)$, for $k=3,\ldots,7$. Since every distance-regular graph is also walk-regular, the value of $W_k$ is just $\frac{1}{n}\tr p(\A)$, easily computed from the spectrum. Note that, in general, the bounds obtained by the above corollaries constitute a significant improvement with respect to those in \cite{Fiolkindep,act16}. In particular, the bounds for $k=6,7$ are either equal or quite closed to the correct values $\alpha_6=2$ (since $G$ is $2$-antipodal, and $\alpha_7=1$ (since $D=7$).

\begin{table}
\begin{center}
\begin{tabular}{|c|ccccc| }
\hline
$k$ & $3$  & $4$ & $5$ & $6$ & $7$ \\
\hline
$P_k(\theta_0)$ & 464 & 125 & 20 & 2 & -- \\
\hline
$W_k$ ($p=x+\cdots+x^k$) & 637 & 17150 & 469910 & 15193479 & 537790827 \\
\hline
$\theta$  & $35$  & $35$ & $35$ & $35$ & $35$ \\
\hline
$\lambda(p)$ ($p=x+\cdots+x^k$) & -301 & 0 & -14707 & 0 & -720601 \\
\hline
$q_k(\delta)$ & 1716 & 2941 & 3382 & 3431 & 3432 \\
\hline
$\lambda(q_k)$ & -40 & -75 & -24 & -1 & 0 \\
\hline
Bound from Theorem \ref{thm:fiol} & 464 & 125 & 20 & 2  & -- \\
\hline
Bound from Theorem \ref{thm:abiad} & 935 & 721 & 546 & 408  & 302 \\
\hline
Bound from Corollary \ref{coro-gen-k} & 26 & 10 & 5 & 3  & 2 \\
\hline
Bound from Corollary \ref{coro-walk-reg} & 80 & 86 & 25 & 2  & 1 \\
\hline
\end{tabular}
\end{center}
\caption{Comparison of bounds for $\alpha_k$ in the Johnson graph $J(14,7)$.}
\label{table2}
\end{table}

\subsection{An antipodal-like bound}

\begin{theorem}
\label{theo3}
Let $G$ be a regular graph with a maximum $k$-independent set of size $r$. Let  $p\in \Re_k[x]$ be a polynomial satisfying $p(\lambda_1)\ge \Lambda(p)> 0$, $\lambda(p)<0$, and assume that $\Lambda(p) \geq|\lambda(p)|(r-1)$.
Then,
\begin{equation}
\label{antipod-bound}
r = \alpha_k\le  1+\frac{\Lambda(p)}{p(\lambda_1)}(n-1).
\end{equation}
\end{theorem}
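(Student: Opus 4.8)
Throughout write $\lambda_1=\delta$ for the degree of the regular graph $G$, so that $p(\lambda_1)$ is the Perron value of $p(\A)$. We may assume $p(\lambda_1)>\Lambda(p)$ and $r\ge 2$, the bound being trivial otherwise. The plan is to run an eigenvalue–interlacing argument for $p(\A)$ with a partition that isolates all but one vertex of a maximum $k$-independent set. Fix such a set $U=\{u_1,\dots,u_r\}$ and put $d_i:=(p(\A))_{u_iu_i}$, relabelling so that $d_1\le\cdots\le d_r$. Since $\deg p\le k$ and any two vertices of $U$ are at distance larger than $k$, every off-diagonal entry of $p(\A)$ indexed by $U\times U$ vanishes, so the principal submatrix of $p(\A)$ indexed by $U$ equals $\diag(d_1,\dots,d_r)$; Cauchy interlacing of this submatrix with $p(\A)$ already gives $d_1,\dots,d_{r-1}\le\lambda_2(p(\A))=\Lambda(p)$ and $d_r\le\lambda_1(p(\A))=p(\lambda_1)$. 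Now apply Theorem~\ref{theo-interlacing} to $p(\A)$ with the partition whose parts are the singletons $\{u_1\},\dots,\{u_{r-1}\}$ and the remaining $n-r+1$ vertices (the vertex $u_r$, carrying the largest $d_i$, is absorbed into the big part). Because $G$ is regular, every row of $p(\A)$ sums to $p(\lambda_1)$, so the quotient matrix $\B$ is ``diagonal plus border'': its leading $(r-1)\times(r-1)$ block is $\diag(d_1,\dots,d_{r-1})$, its last column has entries $p(\lambda_1)-d_i$, and all of its row sums equal $p(\lambda_1)$. Hence $\mathbf 1$ is an eigenvector of $\B$ for $p(\lambda_1)$, and expanding $\det(\B-\mu\I)$ along the border shows that every eigenvalue $\mu$ of $\B$ with $\mu\notin\{p(\lambda_1),d_1,\dots,d_{r-1}\}$ satisfies the secular identity
\begin{equation}
\label{secular}
\sum_{i=1}^{r-1}\frac{1}{d_i-\mu}=\frac{n}{p(\lambda_1)-\mu}.
\end{equation}

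I would then bring in interlacing. The spectrum of $p(\A)$ is $\{p(\lambda_1)\}\cup\{p(\lambda_i):i\in[2,n]\}$, and the hypotheses $p(\lambda_1)>\Lambda(p)>0>\lambda(p)$ make $p(\lambda_1)$ its simple largest value. By Theorem~\ref{theo-interlacing}(i) the eigenvalues of $\B$ interlace those of $p(\A)$, so the smallest eigenvalue $\mu_0$ of $\B$ satisfies $\mu_0\ge\lambda(p)$; moreover $\mu_0<d_1=\min_i d_i$ (it is the unique zero of the secular function in $(-\infty,d_1)$, since $d_1\le\Lambda(p)<p(\lambda_1)$). Evaluating \eqref{secular} at $\mu=\mu_0$, every summand is positive, and using $d_i\le\Lambda(p)$ each is at least $(\Lambda(p)-\mu_0)^{-1}$; hence $n/(p(\lambda_1)-\mu_0)\ge (r-1)/(\Lambda(p)-\mu_0)$, which rearranges to
\begin{equation}
\label{raw}
(r-1)\,p(\lambda_1)\ \le\ n\,\Lambda(p)-(n-r+1)\,\mu_0 .
\end{equation}

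To conclude it suffices to show that the right-hand side of \eqref{raw} is at most $(n-1)\Lambda(p)$, i.e.\ that $\mu_0\ge \Lambda(p)/(n-r+1)$; granting this, \eqref{raw} gives $r-1\le(n-1)\Lambda(p)/p(\lambda_1)$, which is the claimed inequality, and it is sharp (for a suitable quadratic $p$ it is attained on the antipodal bipartite distance-regular graphs of diameter $3$ discussed after Corollary~\ref{k=2}). I expect this final estimate to be the main obstacle, and it is exactly where the extra hypothesis $\Lambda(p)\ge|\lambda(p)|(r-1)$ must be used: one has to turn the two-sided control $\lambda(p)\le d_i\le\Lambda(p)$ together with $(r-1)|\lambda(p)|\le\Lambda(p)$ into a lower bound on the smallest quotient eigenvalue. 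Concretely, the function $\phi(\mu):=\frac{n}{p(\lambda_1)-\mu}-\sum_{i=1}^{r-1}\frac{1}{d_i-\mu}$ is positive as $\mu\to-\infty$, tends to $-\infty$ as $\mu\uparrow d_1$, and $\mu_0$ is its unique zero in $(-\infty,d_1)$, so it would be enough to verify $\phi\big(\Lambda(p)/(n-r+1)\big)\ge 0$ under the hypothesis — bounding $\sum_i (d_i-c)^{-1}$ from above, with $c=\Lambda(p)/(n-r+1)$, by splitting the terms according to the sign of $d_i-c$ and invoking $d_i\ge\lambda(p)$. A variant of the scheme using all $r$ singletons (an $(r+1)$-part partition) replaces \eqref{raw} by $(r-1)p(\lambda_1)\le (n-1)\Lambda(p)-(n-r)\mu_0$ and reduces matters to $\mu_0\ge 0$; I would carry out whichever of the two sign analyses turns out to be cleaner.
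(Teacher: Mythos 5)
Your setup is sound as far as it goes: the secular identity $\sum_{i=1}^{r-1}(d_i-\mu)^{-1}=n/(p(\lambda_1)-\mu)$ is indeed the correct eigenvalue equation for your arrow-shaped quotient matrix, and the inequality $(r-1)\,p(\lambda_1)\le n\,\Lambda(p)-(n-r+1)\,\mu_0$ follows from it correctly. But the step you defer to the end --- and on which the whole argument hinges --- is not a routine sign analysis: it is a false statement. Neither $\mu_0\ge \Lambda(p)/(n-r+1)$ nor the variant's $\mu_0\ge 0$ holds in general. Concretely, take $G$ the Petersen graph, $k=1$, $U$ a maximum independent set (so $r=\alpha_1=4$, $n=10$), and $p(x)=x+\tfrac54$. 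Then $p(\lambda_1)=\tfrac{17}{4}$, $\Lambda(p)=\tfrac94$, $\lambda(p)=-\tfrac34$, and the hypothesis $\Lambda(p)\ge|\lambda(p)|(r-1)$ holds (with equality). All $d_i$ equal $\tfrac54$, and your secular equation $3(\tfrac54-\mu)^{-1}=10(\tfrac{17}{4}-\mu)^{-1}$ gives $\mu_0=-\tfrac{1}{28}$, which is negative and well below $\Lambda(p)/(n-r+1)=\tfrac{9}{28}$; with all four singletons one gets $\mu_0=-\tfrac34<0$. Your raw inequality then only yields $r\le 1+\tfrac{91}{17}\approx 6.35$, strictly weaker than the claimed $r\le 1+\tfrac{81}{17}\approx 5.76$. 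So the route cannot be completed as described: after you discard information by replacing each $d_i$ with $\Lambda(p)$ and $\mu_0$ with a putative lower bound, the hypothesis $\Lambda(p)\ge|\lambda(p)|(r-1)$ never enters the estimates in a usable way, and the quotient of $p(\A)$ by this partition simply does not carry enough to recover the stated bound.

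The paper's proof uses a genuinely different device that consumes that hypothesis exactly once, where it is needed. It forms $\B=\A(K_r)\otimes p(\A)$ and evaluates the quadratic form $\langle\B\f_U,\f_U\rangle$, which equals $0$ because the off-diagonal $U\times U$ entries of $p(\A)$ vanish. On the orthogonal complement of the span of the vectors $\vecphi_i\otimes\j$, the eigenvalues of $\B$ are $(r-1)p(\lambda_j)$ and $-p(\lambda_j)$ for $j\ge 2$, hence all at least $\min\{(r-1)\lambda(p),\,-\Lambda(p)\}$, and the hypothesis $\Lambda(p)\ge|\lambda(p)|(r-1)$ is precisely what makes this minimum equal to $-\Lambda(p)$. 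Comparing with the contribution $\tfrac{r(r-1)}{n}p(\lambda_1)$ of the Perron component of $\f_U$ gives $(r-1)p(\lambda_1)\le\Lambda(p)(n-1)$ directly. If you wish to stay within interlacing, you would have to interlace a matrix that already encodes this $K_r$-weighting of the independent set, rather than a vertex partition of $p(\A)$ itself.
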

\begin{proof}
Let $U=\{u_0,u_1,\ldots,u_{r-1}\}$ be a maximum
$k$-independent set, where $r=|U|=\alpha_k$.  The  matrix
$p(\A)$ has eigenvalues
$p(\lambda_1)\ge \Lambda(p)$ and $p(\lambda_i)$ satisfying  $\lambda(p)\le
p(\lambda_i)\le \Lambda(p)$ for
$2\le i\le n$. Now consider the matrix $\B:=\A(K_r)\otimes p(\A)$.
For instance, for $r=3$ we have
$$
\B=
\left(
\renewcommand{\arraystretch}{1.8}
\begin{array}{c|c|c}
\O &  p(\A) & p(\A) \\
\hline
p(\A) &   \O &  p(\A) \\
\hline
p(\A) &  p(\A)  &    \O
\end{array}
\right).
$$

The complete graph $K_r$ has  eigenvalues $r-1$,  and $-1$ with
multiplicity $r-1$, with corresponding orthogonal eigenvectors
$\vecphi_0 = \j\in \R^r$ and
$\vecphi_i=(1,\omega^i,\omega^{2i},\ldots,\omega^{(r-1)i})^{\top}$,
$1\le i\le r-1$, where
$\omega$ is a primitive $r$-th root of unity, say
$\omega:=e^{j\frac{2\pi}{r}}$. Consequently, each  eigenvector
$\vecu$ of $p(\A)$, with eigenvalue $p(\lambda)$, $\lambda\in
\spec G$, gives rise to the eigenvalues $(r-1)p(\lambda)$, and
$-p(\lambda)$ with multiplicity $r-1$, with corresponding
orthogonal eigenvectors $\vecu_0:=\j\otimes \vecu$ and
$\vecu_i:=\vecphi_i\otimes \vecu$, $1\le i\le r-1$.
Thus, when $\lambda\neq \lambda_1$, we know that
$\lambda(p)\le p(\lambda)\le \Lambda(p)$ and, hence, the corresponding eigenvalues
of $\B$ are within the interval $[\lambda(p)(r-1), \Lambda(p)(r-1)]$. Moreover, $\B$ has
maximum eigenvalue $(r-1)p(\lambda_1)\ge \Lambda(p)(r-1)$.

Now consider the (column) vector
$\f_U:=(\e_{u_0}^\top |\e_{u_1}^\top | \cdots | \e_{u_{r-1}}^\top
)^\top\in \R^{rn}$, and consider its spectral decomposition:
\begin{equation}
\label{spectral-decomp}
\f_U= \sum_{i=0}^{r-1}\frac{\langle  \f_U, \j_i\rangle
}{\|\j_i\|^2}\j_i+\z_U=\frac{1}{n}\j_0+\z_U
\end{equation}
where $\j_i=\vecphi_i\otimes \j$, $\z_U\in \langle\j_0,\j_1,\ldots,
\j_{r-1}\rangle ^{\bot}$, and we have used that
$\langle  \f_U, \j_0\rangle=r$, $\|\j_0\|^2=rn$, and $\langle  \f_U,
\j_i\rangle=\sum_{j=0}^{r-1}\omega^{ij}=0$, for any
$1\le i\le r-1$. From (\ref{spectral-decomp}), we get
$$
\|\z_U \|^2
=\|\f_U\|^2-\frac{1}{n^2}\|\j_0\|^2=r\left(1-\frac{1}{n}\right).
$$

Since there is no path of length $\le k$ between any pair of vertices
of $U$,
$(p(\A))_{u_iu_j}=0$ for any $i\neq j$. Thus,
\begin{eqnarray*}
0 & = & \langle \B\f_U, \f_U\rangle =
\left\langle
\frac{(r-1)p(\lambda_1)}{n}\j_0+\B\z_U, \frac{1}{n}\j_0+\z_U
\right\rangle \\ & = & \frac{r(r-1)p(\lambda_1)}{n} +\langle
\B\z_U, \z_U\rangle \\ &
\ge &  \frac{r(r-1)p(\lambda_1)}{n} +
(r-1)\lambda(p)\|\z_U\|^2  \\
 & = & \frac{r(r-1)}{n}p(\lambda_1) + (r-1)\lambda(p)r\left(\frac{n-1}{n}\right).
\end{eqnarray*} Therefore, we get
$$
p(\lambda_1)\le -\lambda(p)(n-1)=|\lambda(p)|(n-1)=\frac{\Lambda(p)}{r-1}(n-1),
$$
whence \eqref{antipod-bound} follows.
\end{proof}

As a consequence of the above theorem, we have the following result.

\begin{corollary}
\label{coro-antipod-P}
Let $P\in\R_k[x]$ satisfying $P(\lambda_1)\ge \Lambda(P)$. Then,
\begin{equation}
\label{bound-antipod-P}
\alpha_k\le \frac{n(\Lambda(P)-\lambda(P))}{P(\lambda_1)-\lambda(P)}.
\end{equation}
\end{corollary}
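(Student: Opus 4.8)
The plan is to obtain \eqref{bound-antipod-P} from Theorem~\ref{theo3}, but applied not to $P$ itself; instead I would feed Theorem~\ref{theo3} the constant shift $p(x):=P(x)-\beta$, where $\beta$ depends on $r:=\alpha_k$ and is chosen precisely so that the size-dependent hypothesis $\Lambda(p)\ge|\lambda(p)|(r-1)$ of Theorem~\ref{theo3} holds \emph{with equality}. The whole point is that with this choice the many factors of $r$ that appear all cancel, and what survives is exactly \eqref{bound-antipod-P}. This is legitimate because Theorem~\ref{theo3} is phrased with $r$ as a \emph{given} parameter: one first fixes $r=\alpha_k$, and only then picks the polynomial accordingly.

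Concretely, I would first reduce to the main case: we may assume $r:=\alpha_k\ge 2$ and, to avoid a trivial degeneracy, $\Lambda(P)>\lambda(P)$ (so that $P(\lambda_1)\ge\Lambda(P)>\lambda(P)$ and the right-hand side of \eqref{bound-antipod-P} is well defined and positive; when $\Lambda(P)=\lambda(P)$ and $r\ge 2$ the matrix $P(\A)$ is forced to be scalar and the stated bound is not meaningful). Then set $\beta:=\frac{\Lambda(P)+(r-1)\lambda(P)}{r}$ and $p:=P-\beta\in\R_k[x]$. A one-line computation gives $\lambda(p)=\frac{\lambda(P)-\Lambda(P)}{r}<0$, $\Lambda(p)=\frac{(r-1)(\Lambda(P)-\lambda(P))}{r}>0$, and $|\lambda(p)|(r-1)=\Lambda(p)$; moreover, since $P(\lambda_1)\ge\Lambda(P)$ we get $p(\lambda_1)=P(\lambda_1)-\beta\ge\Lambda(P)-\beta=\Lambda(p)>0$. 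Hence $p$ meets all the hypotheses of Theorem~\ref{theo3}, the size-dependent one with equality, and the theorem yields $r\le 1+\frac{\Lambda(p)}{p(\lambda_1)}(n-1)$.

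It then only remains to unwind the algebra. Substituting $\Lambda(p)=\frac{(r-1)(\Lambda(P)-\lambda(P))}{r}$ and $p(\lambda_1)=\frac{rP(\lambda_1)-\Lambda(P)-(r-1)\lambda(P)}{r}$, the factor $\frac1r$ cancels in the ratio, and after subtracting $1$ the inequality becomes $r-1\le\frac{(r-1)(\Lambda(P)-\lambda(P))(n-1)}{rP(\lambda_1)-\Lambda(P)-(r-1)\lambda(P)}$. Since $r\ge 2$ and the denominator equals $r\,p(\lambda_1)>0$, I may cancel the factor $r-1$, clear denominators, and rearrange (using $rP(\lambda_1)-\Lambda(P)-(r-1)\lambda(P)=r(P(\lambda_1)-\lambda(P))-(\Lambda(P)-\lambda(P))$) to reach $r(P(\lambda_1)-\lambda(P))\le n(\Lambda(P)-\lambda(P))$, which is \eqref{bound-antipod-P}. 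The only step that really calls for care is the verification of the four hypotheses of Theorem~\ref{theo3} for the shifted polynomial — especially the sign conditions $\Lambda(p)>0$ and $\lambda(p)<0$, which now hold irrespective of the signs of $\Lambda(P)$ and $\lambda(P)$ thanks to the shift — but this is immediate from the displayed identities; everything else is bookkeeping, and the conceptual novelty is simply that $\beta$ is allowed to depend on the unknown $r$.
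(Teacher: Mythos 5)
Your proposal is correct and is essentially the paper's own argument: the paper applies Theorem~\ref{theo3} to $p(x)=\frac{r}{\Lambda(P)-\lambda(P)}P(x)-\frac{r\lambda(P)}{\Lambda(P)-\lambda(P)}-1$, which is exactly your shifted polynomial $P(x)-\beta$ multiplied by the positive constant $\frac{r}{\Lambda(P)-\lambda(P)}$, and this rescaling changes nothing since the hypotheses and the bound of Theorem~\ref{theo3} are invariant under positive scaling. Your write-up is in fact slightly more careful than the paper's one-line proof, since you make the degenerate cases $r=1$ and $\Lambda(P)=\lambda(P)$ explicit and carry out the algebra in full.
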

\begin{proof}
Notice that, if $P\in\R_k[x]$ is a polynomial with $P(\lambda_1)\ge \Lambda(P)$, and $r >1$, then the polynomial
$$
p(x)=\frac{r}{\Lambda(P)-\lambda(P)}P(x) - \frac{r\lambda(P)}{\Lambda(P)-\lambda(P)}-1
$$
satisfies the hypotheses of Theorem \ref{theo3}. Then, using $p(x)$ with  $r=\alpha_k$ in \eqref{antipod-bound} and solving for $\alpha_k$ we obtain the desired result.
\end{proof}

Note that if $P=P_k$, the $k$-alternating polynomial, Corollary \ref{coro-antipod-P} yields Theorem \ref{thm:fiol}. In particular, in \cite{Fiolkindep} it was shown that
the bound \eqref{bound-antipod-P}  for $\alpha _{d-}$ and $P=P_{d-1}$ is attained for every $r$-antipodal distance-regular graph with $d+1$ distinct eigenvalues (see \cite{Fiolkindep}). For example, one can easily check that, with the order and eigenvalues of $L(K_{2,k+1})$ in \eqref{eigenvals-LK(2,k+1)}, the bound in Theorem \ref{thm:fiol} gives the right value $\alpha_2=2$.

\section{Bounding the diameter}

As a by-product of our results, we can also obtain upper bounds for the diameter of a graph $G$. This is because if $\alpha_k=1$ for some $k$, then the diameter of $G$ must satisfy $D\le k$. To assure that $\alpha_k = 1$, we only need to obtain an upper bound smaller than $2$. As an example, the following result follows as a direct consequence of Corollary \ref{k=2}.

\begin{proposition}
\label{diam}
Let $G$ be a regular graph on $n$ vertices, and with distinct eigenvalues $\theta_0>\cdots>\theta_d$, $d\ge 2$. Let $\theta_i$ the largest eigenvalue not greater than $-1$. Then, if
$$
n\frac{\theta_0+\theta_i\theta_{i-1}}
{(\theta_0-\theta_i)(\theta_0-\theta_{i-1})}<2
$$
then $G$ has diameter $D=2$.
\end{proposition}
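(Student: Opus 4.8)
The plan is to deduce \ref{diam} directly from Corollary \ref{k=2}, treating it as essentially a reformulation. First I would check that the quantities in the statement are well defined: since $d\ge 2$ the graph has at least three distinct eigenvalues, so in particular $G$ has an edge and, as remarked just after Corollary \ref{k=2}, its least eigenvalue $\theta_d$ satisfies $\theta_d\le -1$; hence the largest eigenvalue $\theta_i$ with $\theta_i\le -1$ exists, and since $\theta_0=\delta>0>-1$ we have $i\ge 1$, so $\theta_{i-1}$ also makes sense. Therefore Corollary \ref{k=2} applies and gives
$$
\alpha_2\le n\frac{\theta_0+\theta_i\theta_{i-1}}{(\theta_0-\theta_i)(\theta_0-\theta_{i-1})}.
$$

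Next I would use the hypothesis: the displayed right-hand side is strictly less than $2$, so $\alpha_2\le 1$; combined with the trivial bound $\alpha_2\ge 1$ (a single vertex is always a $2$-independent set) this forces $\alpha_2=1$. I would then translate this into a statement about the diameter. If $G$ were disconnected, two vertices in distinct components would be at distance greater than $2$, giving $\alpha_2\ge 2$; and if $G$ were connected with $D\ge 3$, a pair of vertices realizing the diameter would again be at distance greater than $2$. Either way we would contradict $\alpha_2=1$, so $G$ is connected and $D\le 2$.

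Finally, to upgrade $D\le 2$ to $D=2$, I would invoke $d\ge 2$ once more: the complete graph $K_n$ has spectrum $\{(n-1)^1,(-1)^{n-1}\}$, i.e.\ only two distinct eigenvalues, so $d=1$ for $K_n$; hence a graph with $d\ge 2$ is not complete and therefore has $D\ge 2$. Combining, $D=2$, as claimed. I do not expect any genuine obstacle here, since the content is entirely carried by Corollary \ref{k=2}; the only points needing a sentence of justification are the existence of $\theta_i$ and $\theta_{i-1}$, the inequality $\alpha_2\ge 1$, the fact that $\alpha_2=1$ rules out disconnectedness and diameter at least $3$, and the elementary observation that $d\ge 2$ excludes $G=K_n$.
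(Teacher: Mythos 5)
Your argument is correct and is exactly the route the paper intends: it states Proposition \ref{diam} as a direct consequence of Corollary \ref{k=2} via the observation that an upper bound on $\alpha_2$ strictly below $2$ forces $\alpha_2=1$ and hence $D\le 2$. You simply spell out the details the paper leaves implicit (existence of $\theta_i$ and $\theta_{i-1}$, ruling out disconnectedness, and using $d\ge 2$ to exclude the complete graph so that $D=2$ rather than merely $D\le 2$), all of which are sound.
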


Another interesting conclusion is a result which was first obtained in \cite{fgy96}. Here we show it as a consequence of Corollary \ref{coro-antipod-P} by taking $P=P_k$, the $k$-alternating polynomial (that is, Theorem \ref{thm:fiol}).

\begin{proposition}[\cite{fgy96}]
Let $G$ be a regular graph with $n$ vertices, diameter $D$, and distinct eigenvalues $\theta_0>\cdots > \theta_d$. For some $k\le d-1$, Let $P_k$ be the corresponding $k$-alternating polynomial.
If $P_k(\theta_0)> n-1$, then $D\le k$.
\end{proposition}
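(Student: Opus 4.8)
The plan is to deduce this directly from Corollary \ref{coro-antipod-P} applied with $P = P_k$, the $k$-alternating polynomial, by exploiting the special normalization of $P_k$. First I would recall that by definition $P_k$ satisfies $|P_k(\theta_i)| \le 1$ for all $i = 1,\ldots,d$, so in the notation of the corollary we have $\Lambda(P_k) \le 1$ and $\lambda(P_k) \ge -1$, while $P_k(\theta_0) = P_k(\lambda_1)$ is the maximized value, which is $> 1$ whenever $k \le d-1$ (this positivity/largeness of $P_k(\theta_0)$ is part of the setup in \cite{fgy96}, and in any case $P_k(\theta_0) > n-1 \ge 1$ is exactly our hypothesis). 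In particular $P_k(\lambda_1) \ge \Lambda(P_k)$, so the hypothesis of Corollary \ref{coro-antipod-P} is met.

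Next, plugging $P = P_k$ into the bound \eqref{bound-antipod-P} gives
\[
\alpha_k \le \frac{n(\Lambda(P_k) - \lambda(P_k))}{P_k(\theta_0) - \lambda(P_k)}.
\]
I would then bound the right-hand side from above using $\Lambda(P_k) \le 1$ and $\lambda(P_k) \ge -1$. Writing $s = -\lambda(P_k) \in [-1,1]$ (in fact $s \ge 0$ since $P_k$ takes a value $\le 0$ somewhere, or one can just keep $s\in[-1,1]$), the ratio is $\frac{n(\Lambda(P_k)+s)}{P_k(\theta_0)+s} \le \frac{n(1+s)}{P_k(\theta_0)+s}$, and since $P_k(\theta_0) > 1 \ge \Lambda(P_k)$ the function $s \mapsto \frac{n(1+s)}{P_k(\theta_0)+s}$ is increasing in $s$, hence maximized at $s = 1$, yielding $\alpha_k \le \frac{2n}{P_k(\theta_0)+1}$. (This last inequality is of course Theorem \ref{thm:fiol} itself, so one may alternatively just cite Theorem \ref{thm:fiol} directly and skip the re-derivation.)

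Finally I would observe that the hypothesis $P_k(\theta_0) > n-1$ forces $\frac{2n}{P_k(\theta_0)+1} < \frac{2n}{n} = 2$, so $\alpha_k \le \frac{2n}{P_k(\theta_0)+1} < 2$, and since $\alpha_k$ is a positive integer this gives $\alpha_k = 1$. By the remark opening Section~4, $\alpha_k = 1$ means no two vertices are at distance greater than $k$, i.e.\ $D \le k$, which is the claim. The only genuinely delicate point is justifying that $\Lambda(P_k) \le 1$ and $\lambda(P_k) \ge -1$ are the right quantities to feed into the corollary — that is, matching the corollary's $\Lambda(P)$ and $\lambda(P)$, which range over $i \in [2,n]$ (equivalently over $\theta_1,\ldots,\theta_d$), with the constraint $|P_k(\theta_i)| \le 1$ for $i = 1,\ldots,d$ in the definition of the $k$-alternating polynomial; this is a matter of bookkeeping about indexing of eigenvalues, and there is no substantive obstacle beyond it.
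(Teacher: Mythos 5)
Your proposal is correct and follows essentially the same route as the paper: the paper also deduces the proposition by noting that Corollary \ref{coro-antipod-P} with $P=P_k$ recovers Theorem \ref{thm:fiol}, and then the hypothesis $P_k(\theta_0)>n-1$ forces $\alpha_k\le \frac{2n}{P_k(\theta_0)+1}<2$, hence $\alpha_k=1$ and $D\le k$. The only difference is that you spell out the re-derivation of Theorem \ref{thm:fiol} from the corollary (the normalization $|P_k(\theta_i)|\le 1$ and the monotonicity in $s=-\lambda(P_k)$), which the paper leaves implicit.
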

\begin{proof}
The sufficient condition comes from assuming that $\alpha_k\le \frac{2n}{P_k(\theta_0)+1}<2$.
\end{proof}


\subsection*{Acknowledgments}
We would like to thank S. M. Cioab\u{a} for helpful discussions during an early stage of this work.\\
We greatly acknowledge the anonymous referees for their detailed  comments, suggestions, and queries. They led to significant improvements on the first version of this article. In particular, both referees encouraged us to get an improvement on Corollary \ref{k=2}, by asking for the right choice of the polynomials involved. In this context, the first referee gave some particular examples, whereas the second one pointed out to the best polynomials for many cases. This led to significant improvements on many entries in Table \ref{table1}, and also led us to find the infinite families of distance-regular graphs where our new bound is tight.\\
Research of M. A. Fiol was partially supported by the project 2017SGR1087 of the Agency for the Management of University and Research Grants (AGAUR) of the Government of Catalonia. G. Coutinho acknowledges a travel grant from the Dept. of Computer Science of UFMG.


\end{document}